      \theoremstyle{plain}
      \newtheorem{theorem}{Theorem}[section]
      \newtheorem{lemma}[theorem]{Lemma}
      \newtheorem{corollary}[theorem]{Corollary}
      \newtheorem{proposition}[theorem]{Proposition}
      \theoremstyle{definition}
      \newtheorem{definition}[theorem]{Definition}
      \newtheorem{remark}{Remark}
       \newtheorem{example}{Example}[section]
     \newtheorem*{notation}{Notation}   
      \newtheorem{question}[theorem]{Question}
      \newcommand{\nil}{\emptyset}
      \newcommand{\g}{{\mathfrak g}}
      \newcommand{\h}{{\mathfrak{h}}}
      \newcommand{\f}{\mathfrak f}
      \newcommand{\e}{\mathbf e}
       \newcommand{\E}{{\mathcal{E}}}
       \newcommand{\F}{{\mathcal{F}}}
     \newcommand{\Con}{\operatorname{Con}}
      \newcommand{\n}[1]{\ell(#1)}
      \newcommand{\con}[2]{\Con(\mathfrak{#1},\mathcal{#2})}
      \newcommand{\T}[2]{\vartheta^{\mathfrak{#1}}_{\mathfrak{#2}}}
       \newcommand{\atom}[1]{\textbf{atom}(\mathcal{#1})} 
     \newcommand{\inn}[1]{\text{Inn}\,#1}
      \newcommand{\A}{\mathcal{A}}
       \newcommand{\B}{\mathcal{B}}
      \def\@setcopyright{}
      \def\serieslogo@{}
\begin{document}

   \author{Ali Rejali}
   \address{University of Isfahan}  
   \curraddr{Department of Mathematics, Faculty of  Sciences , University of Isfahan University, Isfahan, 81746-73441, Iran}
   \email{rejali@sci.ui.ac.ir}


   \author{Meisam Soleimani Malekan}
   \address{Department of Mathematics, Ph. D student, University of Isfahan, Isfahan, 81746-73441, Iran}
   \email{m.soleimani@sci.ui.ac.ir}

  \title{Solubility of groups can be characterized by configuration}

\begin{abstract} The concept of configuration was first introduced by Rosenblatt
and Willis to give a characterization for the amenability of groups. We show that group properties of being soluble or FC can be characterized by configuration sets. Then we investigate some condition on configuration pairs, which leads to isomorphism. We introduce a somewhat different notion of configuration equivalence, namely strong configuration equivalence, and prove that strong configuration equivalence coincides with isomorphism.  \end{abstract}

   \subjclass[2010]{20F05, 20F16, 20F24, 43A07}

   \keywords{Configuration, Amenability, Commutator Subgroup, FC-Group, Soluble Group, Finitely Presented Group, Hopfian Group}

   \thanks{The authors would like to express  their gratitude towards Banach Algebra Center of Excellence for Mathematics, University of Isfahan.}
   \thanks{}

   \dedicatory{}

   \date{\today}


   \maketitle



\section{Introduction and Definitions}
    
   In the present paper, all groups are assumed to be finitely generated. Let $G$ be a group, we denote the identity of the group $G$ by $e_G$. We refer readers to \cite{combin} for terminology and statements used for finitely generated groups. \par     
    The notion of a configuration for a group was introduced in \cite{rw}. It was shown in that paper that the amenability of a group can be characterized by configurations.   
  \begin{definition}
  \label{1}
  let $G$ be a group. Let $\mathfrak{g} =(g_1,\dotsc, g_n)$ be an ordered generating set and $\mathcal{E} =\{E_1,\dotsc,E_m\}$ be a finite partition of G. 
  \par A \textit{configuration} $C$ corresponding to $(\g,\E)$, is an $(n+1)$-tuple $C=(c_0,\dotsc, c_n)$, where $c_k\in\{1,\dotsc,m\}$ for each $k$, such that there are $x_0, x_1,\dotsc, x_n\in G$ with $x_k\in E_{c_k}$, $j=0,1,\dotsc, n$, and for each $k=1,\dotsc,n$, $x_k=g_kx_0$. In this case, we say that $(x_0,x_1,\dotsc,x_n)$ has configuration $C$.
  \end{definition}
  For $\g$ and $\E$ as above, we call $(\g,\E)$ a \textit{configuration pair}. The set of configurations corresponding to the configuration pair $(\g,\E)$ will be denoted by $\con gE$. The set of all configuration sets of $G$ is denoted by $\Con(G)$. It is not hard to see,
\begin{remark}\label{remark 1}
 Let $\con gE$ be a configuration set for a group $G$ and let us have $y\in G$ and $E\in\E$. Then it may be assumed that $y\in E$.
\end{remark}
 In \cite{rw}, the authors conjectured that combinatorial properties of configurations can be used to characterize various kinds of behavior of groups, specially, group properties which lead to amenability. According to this conjecture, in [2], the notion of configuration equivalence was created: A group $G$ is \textit{configuration contained} in a group $H$, written $G\precsim H$, if $\Con(G)\subseteq\Con(H)$, and two groups $G$ and $H$ are \textit{configuration equivalent}, written $G\approx H$, if $\Con(G) =\Con(H)$. \par 
 It would be worthy of mention that the condition that $\Con(\g,\E)=\Con(\h,\F)$ implies that the generating sets $\g$ and $\h$ and the partitions $\E$ and $\F$ each have the same numbers of elements.
 \begin{notation}\label{Os}
 Let $G$ and $H$ be two groups with generating sets $\g$ and $\h$, respectively. Suppose that for partitions 
 $\E=\{E_1,\dotsc,E_m\}$ and $\F=\{F_1,\dotsc,F_m\}$ of $G$ and $H$ respectively, the equality 
 $\con gE=\con hF$ established. Then we say that $E_i$ is \textit{corresponding} to $F_i$, and write $E_i \leftrightsquigarrow F_i$, $i=1,\dotsc,m$.
 \end{notation} 
 
 The first question discussed following the definition of configuration equivalence is that of which properties of the groups can be characterized by configuration sets?
\par In \cite{arw}, Abdollahi, Rejali and Willis showed that finiteness and periodicity are the properties which can be characterized by configuration. In that paper, the authors proved that for two configuration equivalent groups, the classes of their isomorphic finite quotients are the same. The word "finite" in the previous statement, can be replaced by "Abelian" (see \cite{ary}). 
\\ Let $\textbf{F}_n$ be the free group on the set $\{f_1,\dotsc,f_n\}$, where $n$ is a positive integer. Suppose that $\mu=\mu(f_1,\dotsc,f_n)$ is an element of $\textbf{F}_n$. we call $\mu=e_G$ a group-law in a group $G$, if for all $n$-tuples $(x_1,\dotsc,x_n)$ of elements of $G$, we have $\mu(x_1,\dotsc,x_n)=e_G$. It was shown in \cite[Theorem 5.1]{arw} that two configuration equivalent groups, should satisfy in the same semi-group laws, and we generalized this result by proving that same group laws should be established in configuration equivalent groups. Hence, in particular, being Abelian and the group property of being nilpotent of class $c$ are other properties which can be characterized by configuration (see \cite{arw} and \cite{ary}). In \cite{ary}, it was shown that if $G\approx H$, and $G$ is a torsion free nilpotent group of Hirsch length $h$, then so is $H$. It is interesting to know the answer to the question whether being FC-group is conserved by equivalence of configuration. In \cite{ary}, this question was answered under the assumption of being-nilpotent. Here we affirmatively answer this question without any extra hypothesis. In addition, we show that the solubility of a group $G$ can be recovered from $\Con(G)$. 
\par 
 Also, the question that in which groups configuration equivalence implies isomorphism, has been of interest. In other words, for which groups $G$, if $G\approx H$ for a group $H$, then will $H$ be isomorphic to $G$? 
\par In \cite{arw}, it was shown that for the classes of finite, free and Abelian groups, these two notions, configuration equivalence and isomorphism, are the same. In \cite{ary}, it was proved that those groups with the form of $\mathbb{Z}^n\times F$, where $\mathbb Z$ is the group of integers, $n$ is a positive integer and $F$ is an arbitrary finite group, are determined up to isomorphism by their configuration sets. In \cite{ary}, it was proved that if $G\approx D_\infty$, where $D_\infty$ is the infinite dihedral group, then $G\cong D_\infty$.
 \par Studying the proof of the statements mentioned in \cite{arw} and \cite{ary}, we found out that it was the existence of certain configuration pairs which implied isomorphism. We call this certain type of configuration pair \textit{golden} and in Theorem \ref{last}, we will show that in the class of finitely presented Hopfian groups with golden configuration pair, configuration equivalence coincides with isomorphism.
\par For the concept of configuration equivalence matches with isomorphism, we think that the identity element of a group should be recognized by configuration sets, and it seems that the usual definition of configuration equivalence could not do so; That is, if a partition $\E$ of a group $G$ contains $\{e_G\}$, then the equality $\con gE=\con {g'}{E'}$, for configuration pairs $(\g,\E)$ and $(\g',\E')$ of $G$, can not assure us that $\E'$ contains $\{e_G\}$, too. This defect propelled us to introduce a new version of configuration equivalence which turns to be coincided with isomorphism.

\section{Configuration and Group Properties}
 At first, we require the notation below to avoid writing long in our proofs:   
  \begin{notation}
  \label{not}
Let $G$ be a group with $\mathfrak{g}=(g_1,\dotsc,g_n)$ as its generating set. Let $p$ be a positive integer, let  $J$ and $\rho$ be $p$-tuple with components in $\{1,2,\dotsc,n\}$ and $\{\pm1\}$, respectively. We denote the product 
$\prod_{i=1}^pg_{J(i)}^{\rho(i)}$ by $W(J,\rho;\mathfrak{g})$. We call the pair 
$(J,\rho)$ a \textit{representative pair} on $\g$ and $W(J,\rho;\mathfrak{g})$ the \textit{word} corresponding to $(J,\rho)$ in $\g$. 
\end{notation}
For an arbitrary multiple, $J$, we denote the number of its components by $\n J$. When we speak of a representative pair, $(J,\rho)$, we assume the same number of components for $J$ and $\rho$. 
If $J=(J(1),\dotsc,J(p))$, and $\rho=(\rho(1),\dotsc,\rho(p))$, where $p$ is a positive integer, we set
 \begin{align*}
 J^{-1}&:=(J(p),\dotsc,J(1))\quad\text{and}\quad
 \rho^{-1}:=(-\rho(p),\dotsc,-\rho(1))
\end{align*}
\par
For $p_i\in\mathbb{N}$, if $J_i$ is a $p_i$-tuple, $i=1,2$, $J_1\oplus J_2$ is a $(p_1+p_2)$-tuple that has $J_1$ as its first $p_1$ components, and $J_2$ as its second $p_2$ components. It can be easily seen that
$$W(J_1,\rho_1;\mathfrak{g})W(J_2,\rho_2;\mathfrak{g})=
W(J_1\oplus J_2,\rho_1\oplus\rho_2;\mathfrak{g})$$
and 
$$W(J,\rho;\g)^{-1}=W(J^{-1},\rho^{-1};\g)$$
for representative pairs $(J_i,\rho_i)$, $i=1,2$.
\par Let $G$ and $H$ be two groups with generating sets  $\g=(g_1,\dotsc,g_n)$ and $\h=(h_1,\dotsc,h_n)$, respectively. There is a relation, denoted by $\T hg$, from $G$ to $H$ which contains $(g,h)\in G\times H$, if there is a representative pair $(J,\rho)$ such that $g=W(J,\rho;\g)$ and $h=W(J,\rho;\h)$. By the above notation, it is easily noticeable that:
\begin{itemize}
\item If $\T hg$ is a function, then it will automatically be a homomorphism.
\item $\T hg$ is an epimorphism of groups if and only if for every representative pair $(J,\rho)$, $W(J,\rho;\g)=e_G$ implies $W(J,\rho;\h)=e_H$.
\item  $\T hg$ is an isomorphism of groups if and only if both relations, $\T hg$ and $\T gh$ are epimorphism. 
\end{itemize}
\par Recall that we say that a property \textbf{P} can be characterized by configuration sets if all of configuration equivalent groups have property \textbf{P} in common or do not have this property. It is likely that the group properties which imply amenability, can be characterized by configurations. In the papers written on configuration, some of these properties such as being finite, Abelian, nilpotent of class $c$, amenable or non-amenable are investigated. We will prove that being FC and solubility are two other such properties that can be characterized by configurations.\\ 
\par In the definition of configuration sets, we can replace "partition" by a " finite $\sigma$-algebra". Working with finite $\sigma$-algebras save us writing long. We involve $\sigma$- algebras in the theory of configuration as follows:\par 
 Let $G$ be a group. There is a correspondence between finite $\sigma$-algebras of $G$, and finite partitions of $G$. Indeed, for a finite $\sigma$-algebra $\A$, the set of atoms\footnote{An atomic set of a $\sigma$-algebra $\A$, is a non-empty element, which contains no other elements of $\A$.} of $\A$ is a partition of $G$, and for a finite collection $\mathcal{C}$ of subsets of $G$, the $\sigma$-algebra generated by elements of $\mathcal{C}$ is finite. We denote the atomic sets of a $\sigma$-algebra $\A$ by $\atom A$. Also, if $\mathcal{C}$ is a finite collection of subsets of $G$, we use $\sigma(\mathcal C)$ to denote the $\sigma$-algebra generated by $\mathcal C$. In the following, we always consider sigma algebras to be finite. Now, for a $\sigma$-algebra $\A$, we define $\con gA$ to be $\Con(\g,\atom A)$. \\
We can also use $\leftrightsquigarrow$ for sigma algebras; Let $\E:=\{E_1,\dotsc,E_m\}$ and $\F:=\{F_1,\dotsc,F_m\}$ be partitions of $G$ and $H$ respectively, such that $E_i\leftrightsquigarrow F_i$, $i=1,\dotsc,m$. For $A\in\sigma(\E)$ and $B\in\sigma(\F)$, say $A\leftrightsquigarrow B$, when 
$$\{k:\,E_k\cap A\neq\nil\}=\{k:\, F_k\cap B\neq\nil\}.$$
In other words, if $A\leftrightsquigarrow B$, and $A=E_{i_1}\cup\dotsb\cup E_{i_j}$, then $B=F_{i_1}\cup\dotsb\cup F_{i_j}$. 
In the following, we will use this technical lemma:
\begin{lemma}
\label{important}
Let $G$ and $H$ be two groups with finite $\sigma$-algebras $\A$ and $\B$, and generating sets $\g=(g_1,\dotsc,g_n)$, and $\h=(h_1,\dotsc,h_n)$, respectively, such that $\con gA=\con hB$. Suppose that $A_1,A_2\in\A$ and $B_1, B_2\in\B$, are such that $A_i\leftrightsquigarrow B_i$, $i=1,2$. we have 
\begin{itemize}
\item[(a)] If $g_rA_1\subseteq A_2$, then $h_rB_1\subseteq B_2$, $r\in\{1,\dotsc,n\}$,
\item[(b)] If $g_rA_1=A_2$, then $h_rB_1=B_2$, $r\in\{1,\dotsc,n\}$.
\end{itemize}
\end{lemma}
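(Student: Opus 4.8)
The plan is to recast each set-theoretic containment as a purely combinatorial condition on the index tuples that make up $\con gA$, and then to transport that condition to $H$ using the hypothesis $\con gA=\con hB$ together with the correspondence $\leftrightsquigarrow$. First I would fix notation: write $\atom A=\{E_1,\dots,E_m\}$ and $\atom B=\{F_1,\dots,F_m\}$ with $E_i\leftrightsquigarrow F_i$. Since $A_1,A_2\in\A$ are unions of atoms, set $S=\{a:E_a\subseteq A_1\}$ and $T=\{b:E_b\subseteq A_2\}$. By the definition of $\leftrightsquigarrow$, the relations $A_i\leftrightsquigarrow B_i$ say precisely that $B_1=\bigcup_{a\in S}F_a$ and $B_2=\bigcup_{b\in T}F_b$, so $B_1,B_2$ carry the very same index sets $S,T$; this is what lets me reuse, on the $H$-side, any condition stated in terms of $S$ and $T$.

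The engine of the proof is the fact that the base point $x_0$ of a configuration ranges over all of $G$, and that the tuple $(c_0,\dots,c_n)$ it realizes is then determined by $x_0\in E_{c_0}$ and $g_rx_0\in E_{c_r}$. Using this I would establish
$$g_rA_1\subseteq A_2\iff\bigl(\text{every }C\in\con gA\text{ with }c_0\in S\text{ has }c_r\in T\bigr).$$
For the forward direction, a configuration with $c_0\in S$ has base point $x_0\in E_{c_0}\subseteq A_1$, whence $g_rx_0\in g_rA_1\subseteq A_2$ forces $c_r\in T$. For the reverse, any $x_0\in A_1$ lies in some $E_a$ with $a\in S$, and the configuration it realizes then has $c_r\in T$, that is $g_rx_0\in A_2$. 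The right-hand side of this equivalence mentions only the tuples of $\con gA$ and the index sets $S,T$, so by $\con gA=\con hB$ it holds verbatim for $\con hB$. Applying the same equivalence inside $H$, with the same $S,T$, yields $h_rB_1\subseteq B_2$, which is part (a).

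For part (b) I would split $g_rA_1=A_2$ into $g_rA_1\subseteq A_2$ and $A_2\subseteq g_rA_1$. The first inclusion already gives $h_rB_1\subseteq B_2$ by part (a). The second I would rewrite, using that $g_r$ acts bijectively on $G$, as $g_r^{-1}A_2\subseteq A_1$, and then re-parametrise by the base point $x_0=g_r^{-1}y$ to obtain
$$A_2\subseteq g_rA_1\iff\bigl(\text{every }C\in\con gA\text{ with }c_r\in T\text{ has }c_0\in S\bigr),$$
again a statement purely about the occurring index tuples. Transporting it across $\con gA=\con hB$ exactly as before gives $h_r^{-1}B_2\subseteq B_1$, i.e. $B_2\subseteq h_rB_1$; together with $h_rB_1\subseteq B_2$ this yields $h_rB_1=B_2$.

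The step I expect to require the most care is the passage from single atoms to arbitrary members of the $\sigma$-algebra: because $A_1,A_2$ are unions of atoms, the two displayed equivalences must be phrased with the index sets $S,T$ rather than with single indices, and one must verify that $\leftrightsquigarrow$ transfers these index sets unchanged to $B_1,B_2$, which is immediate from its definition. Once both equivalences are in place, the transport is automatic, since $\con gA=\con hB$ asserts exactly that the same index tuples occur on the two sides.
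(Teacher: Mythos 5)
Your proposal is correct and follows essentially the same route as the paper: the paper likewise writes $A_k=\bigcup_{i\in I_k}E_i$ and $B_k=\bigcup_{i\in I_k}F_i$ for the common index sets $I_k=\{i:E_i\cap A_k\neq\emptyset\}$, observes that $g_rA_1\subseteq A_2$ means every configuration with $c_0\in I_1$ has $c_r\in I_2$ (and for (b) that this is an equivalence), and transports the condition across $\con gA=\con hB$. Your write-up merely spells out the "if and only if" recoding and the bijectivity of translation in more detail than the paper does.
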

\begin{proof}
Set 
$$\atom A=\{E_1,\dotsc,E_m\}, \text{and}\,\atom B=\{F_1,\dotsc,F_m\}.$$
such that $E_i\leftrightsquigarrow F_i$, $i=1,\dotsc,m$. Without loss of generality, assume that $r=1$. Also, set 
$$‎I_k:=\{i: ‏‎E‎_i\cap ‎A_k\neq\nil\},\quad ‎k=1,2.$$
So, by assumptions, 
$$A_k=\bigcup_{i\in I_k}E_i, \text{and}\, B_k=\bigcup_{i\in I_k}F_i\quad(k=1,2).$$
Now, for $‎‎‎‎‎‎C=‎‎‎(c_0,c_1,\dotsc,c_n)‎$ in $\con gA$, $c_1\in I_2$ if $c_0\in I_1$, this proves (a).\par 
For proving (b), note that if $‎‎‎‎‎‎C=‎‎‎(c_0,c_1,\dotsc,c_n)‎$ is in $\con gA$, then $c_0\in I_1$, if and only if $c_1\in I_2$.
\end{proof}
 A little more preparation is needed to go through the main lemma of this paper:
 \begin{definition}
Assume that $G$ and $H$ are two groups, and let $F$ be a finite subset of $G$ containing $e_G$. A map $\phi:FF^{-1}\rightarrow H$ is called \textit{a local homomorphism} on $F$, if 
$$\phi(xy^{-1})=\phi(x)\phi(y)^{-1}\quad (x,y\in F).$$
 \end{definition}
 Like homomorphisms, for a local homomorphism $\phi$ on $F$, we have
 \begin{itemize}
 \item $\phi(e_G)=e_H$, 
 \item $\phi(x^{-1})=\phi(x)^{-1}$, $x\in F$.
 \end{itemize}
If $F$ is a finite subgroup of $G$, then it will be clear that a local homomorphism $\phi$ on $F$ becomes a homomorphism of groups. 
\par We, now state the key lemma of the paper.
\begin{lemma}
\label{l21}
Let $G$ and $H$ be two groups such that $G\precsim H$. Let $\g=(g_1,\dotsc,g_n)$ be a generating set of $G$ and $\mathfrak{F}$ be a finite set of representative pairs on $\g$. Then there exists a generating set $\h=(h_1,\dotsc,h_n)$ of $H$ such that $\T gh$ is a local homomorphism on $\{e_H\}\cup\{W(J,\rho;\h):(J,\rho)\in\mathfrak{F}\}$ 
\end{lemma}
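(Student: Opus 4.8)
The plan is to capture all the words $W(J,\rho;\g)$, $(J,\rho)\in\mathfrak F$, together with everything needed to read off products in $FF^{-1}$, inside one finite $\sigma$-algebra on $G$, and then to push the resulting one-generator multiplication relations over to $H$ using $G\precsim H$ and Lemma~\ref{important}. First I would enlarge $\mathfrak F$ so that it is closed under the operations occurring in a local homomorphism: writing $\hat{\mathfrak F}=\mathfrak F\cup\{(\nil,\nil)\}$ (so that the target set is $F=\{W(J,\rho;\h):(J,\rho)\in\hat{\mathfrak F}\}$), put
$$\mathfrak F'=\{(J_1\oplus J_2^{-1},\rho_1\oplus\rho_2^{-1}):(J_i,\rho_i)\in\hat{\mathfrak F}\},$$
which is finite, contains $\hat{\mathfrak F}$, and satisfies $FF^{-1}=\{W(J,\rho;\h):(J,\rho)\in\mathfrak F'\}$. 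Let $P\subseteq G$ be the finite set consisting of $e_G$ and of all suffixes $W\big((J(k),\dots,J(\n J)),(\rho(k),\dots,\rho(\n J));\g\big)$ of the pairs $(J,\rho)\in\mathfrak F'$, and let $\mathcal A=\sigma(\{\{s\}:s\in P\})$, whose atoms are the singletons $\{s\}$ ($s\in P$) and the remainder $G\setminus P$. Since $G\precsim H$ gives $\con gA\in\Con(H)$, there are a generating set $\h=(h_1,\dots,h_n)$ of $H$ and a finite $\sigma$-algebra $\mathcal B$ with $\con gA=\con hB$; let $\leftrightsquigarrow$ be the induced atom correspondence and let $F_0$ be the atom with $\{e_G\}\leftrightsquigarrow F_0$, normalized by Remark~\ref{remark 1} so that $e_H\in F_0$.

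The key step is to trace each word through $\mathcal A$ one generator at a time. Fix $(J,\rho)\in\mathfrak F'$, write $p=\n J$, and set $v_{p+1}=e_G$ and $v_k=g_{J(k)}^{\rho(k)}v_{k+1}$, so that $v_1=W(J,\rho;\g)$ and every $v_k$ lies in $P$. Each relation $g_{J(k)}^{\rho(k)}\{v_{k+1}\}=\{v_k\}$ (when $\rho(k)=1$) or $g_{J(k)}\{v_k\}=\{v_{k+1}\}$ (when $\rho(k)=-1$) is a multiplication of atoms of $\mathcal A$, so Lemma~\ref{important}(b) transfers it to the corresponding atoms of $\mathcal B$, yielding in either case $B(v_k)=h_{J(k)}^{\rho(k)}B(v_{k+1})$, where $B(v)$ denotes the atom with $\{v\}\leftrightsquigarrow B(v)$. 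Chaining from $v_{p+1}$ up to $v_1$ produces $B(W(J,\rho;\g))=W(J,\rho;\h)\,F_0$, and since $e_H\in F_0$ this delivers the single fact everything rests on:
$$W(J,\rho;\h)\in B(W(J,\rho;\g))\qquad\big((J,\rho)\in\mathfrak F'\big).$$

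From here the conclusion is a short check. Distinct elements of $P$ are distinct singleton atoms of $\mathcal A$, hence correspond under $\leftrightsquigarrow$ to distinct, and therefore disjoint, atoms of $\mathcal B$; so if two pairs of $\mathfrak F'$ give the same word in $\h$, the displayed membership places that word in two atoms $B(\cdot)$, forcing them to coincide and hence forcing the two words in $\g$ to be equal. Thus $\T gh$ restricted to $FF^{-1}$ is a well-defined function sending each $W(J,\rho;\h)$ to $W(J,\rho;\g)$, and in particular $e_H\mapsto e_G$. Finally, for $x=W(J_1,\rho_1;\h)$ and $y=W(J_2,\rho_2;\h)$ in $F$ one has $xy^{-1}=W(J_1\oplus J_2^{-1},\rho_1\oplus\rho_2^{-1};\h)$, whence $\T gh(xy^{-1})=W(J_1\oplus J_2^{-1},\rho_1\oplus\rho_2^{-1};\g)=W(J_1,\rho_1;\g)W(J_2,\rho_2;\g)^{-1}=\T gh(x)\T gh(y)^{-1}$, which is exactly the local homomorphism identity.

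The main obstacle is the first key step: a single configuration only records the one-generator translates $x_k=g_kx_0$, so a whole word cannot be seen at once. The device that gets around this is to insert every suffix of every relevant word as its own atom and to apply Lemma~\ref{important}(b) repeatedly along the word; and it is the use of Remark~\ref{remark 1} to force $e_H\in F_0$ that upgrades the set equality $W(J,\rho;\h)F_0=B(\cdot)$ into the membership $W(J,\rho;\h)\in B(\cdot)$ actually needed — precisely the identity-element point highlighted in the introduction.
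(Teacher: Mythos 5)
Your proof is correct and follows essentially the same route as the paper: singleton atoms for the relevant words, iterated use of Lemma \ref{important}(b) along each word to get $W(J,\rho;\h)\in B(W(J,\rho;\g))$ after normalizing $e_H\in F_0$, and then well-definedness from disjointness of atoms plus closure of the word set under $xy^{-1}$. The only difference is bookkeeping — you close your finite family explicitly under suffixes and products, where the paper achieves the same closure by taking all words of length at most $3n_0$, $2n_0$, $n_0$.
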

\begin{proof}
For a representative pair $(J,\rho)$, set $E(J,\rho):=\{W(J,\rho;\g)\}$ and set $E(1)=\{e_G\}$. Let $n_0:=\max\{\n J:\,(J,\rho)\in\mathfrak F\}$ and put 
\begin{small}
\begin{align*}
S_0&:=\{(J,\rho):\, \n J\leq 3 n_0,\rho\,\text{is arbitrary}\}\\
S_1&:=\{(J,\rho):\, \n J\leq 2 n_0,\rho\,\text{is arbitrary}\}\\
S_2&:=\{(J,\rho):\, \n J\leq n_0,\rho\,\text{is arbitrary}\}.
\end{align*}
\end{small}
A combinatorial argument shows that all above sets are finite. Let $\A$ be the $\sigma$-algebra generated by $E(1)$ and the sets $E(J,\rho)$, $(J,\rho)\in S_0$. Since $E(J,\rho)$'s are singleton, we have 
\begin{equation}
\label{ce}
E(J,\rho)=W(J,\rho;\g)E(1),\quad (J,\rho)\in S_0
\end{equation} 
By $G\precsim H$, there is a generating set $\h$ and a $\sigma$-algebra $\B$ of $H$ such that $\con gA=\con hB$. We denote by $F(1)$ and $F(J,\rho)$, $(J,\rho)\in S_0$, elements in $\B$ where
$$E(1)\leftrightsquigarrow F(1), \quad E(J,\rho)\leftrightsquigarrow F(J,\rho),\quad  (J,\rho)\in S_0.$$
 Without loss of generality, we can assume that $e_H\in F(1)$. We claim that the following equations are established 
 \begin{align}
 \label{eq1}
 F(J,\rho)=W(J,\rho;\h)F(1),\quad (J,\rho)\in S_1
 \end{align}
We prove this claim by induction on $\n J$. If $J$ has only one component, there is nothing to be proved by Lemma \ref{important}(b). Now, suppose that the equation (\ref{eq1}) is established when $\n J<p$. Let 
$$J=(J(1),J(2),\dotsc,J(p))\quad\text{and}\quad\rho=(\rho(1),\rho(2),\dotsc\rho(p))$$ be such that $(J,\rho)\in S_1$. Let $I_1=(J(1))$, $I_2=(J(2),\dotsc,J(p))$, $\delta_1=(\rho(1))$, and $\delta_2=(\rho(2),\dotsc,\rho(p))$. Therefore $J=I_1\oplus I_2$ and $\rho=\delta_1\oplus\delta_2$. By induction hypothesis, we have $F(I_2,\delta_2)=W(I_2,\delta_2;\h)\,F(1)$. The equality $\con gA=\con hB$ and Lemma \ref{important}(b) imply that $$F(J,\rho)=W(I_1,\delta_1;\h) F(I_2,\delta_2).$$ So, again using Lemma \ref{important}(b), we have
\begin{small}
\begin{equation*}
\begin{split}
F(J,\rho)&=W(I_1,\delta_1;\h) F(I_2,\delta_2)\\
&=W(I_1,\delta_1;\h)W(I_2,\delta_2;\h) F(1)=W(J,\rho;\h)F(1)
\end{split}
\end{equation*}
\end{small}
and this proves the equation (\ref{eq1}) for $\n J=p$. \par 
By equation (\ref{eq1}) we have $W(J,\rho;\h)\in F(J,\rho)$. Now, if we have $W(J,\rho;\h)=e_H$ for some pair $(J,\rho)\in S_1$, according to obtained equalities, we get $F(J,\rho)=F(1)$, so $E(J,\rho)=E(1)$ and this gives $W(J,\rho;\g)=e_G$. Hence, $\T gh$ is a well-defined local homomorphism on 
$$\{e_H\}\cup\{W(J,\rho;\h): (J,\rho)\in S_2\}.$$
Indeed if $W(J,\rho;\h)=W(I,\delta;\h)$, for $(J,\rho)$ and $(I,\delta)$ in $S_2$, then $W(J\oplus I^{-1},\rho\oplus\delta^{-1};\h)=e_H$, and $(J\oplus I^{-1},\rho\oplus\delta^{-1})\in S_1$, so $W(J\oplus I^{-1},\rho\oplus\delta^{-1};\g)=e_G$, and this implies that $W(J,\rho;\g)=W(I,\delta;\g)$. 
But $\mathfrak{F}\subseteq S_2$, therefore $\T gh$ is a local homomorphism on $\{e_H\}\cup\{W(J,\rho;\h):(J,\rho)\in\mathfrak{F}\}$.
\end{proof}
The following result can be obtained from the proof of the above lemma:
\begin{remark}\label{local homomorphism and refinements}
Let $G$ and $H$ be two groups with $G\approx H$. Let $(\g,\E)$ be a configuration pair of $G$ and $\mathfrak{F}$ be a finite set of representative pairs on $\g$. Let $\E'$ be a refinement of $\E$ which contains $\{e_G\}$ and singletons $\{W(J,\rho;\g)\}$, $(J,\rho)\in S_0$, where $S_0$ is defined as in the proof of the previous lemma. Assume that $\con g{E'}=\con h{F'}$, for a configuration pair $(\h,\F')$ of $H$. We denote by $F(1)$ and $F(J,\rho)$, $(J,\rho)\in S_0$, elements in $\F'$ where
$$\{e_G\}\leftrightsquigarrow F(1), \quad \{W(J,\rho;\g)\}\leftrightsquigarrow F(J,\rho),\quad  (J,\rho)\in S_0.$$
Then we have $W(J,\rho;\h)\in F(J,\rho)$, for $(J,\rho)\in\mathfrak{F}$
\end{remark}
In \cite[Theorem 5.1]{arw}, it was proved that two configuration equivalent groups satisfy in same semi-group laws; Considering Lemma \ref{l21}, we can generalize this result:

\begin{proposition}
Let $G$ and $H$ be two groups with  $H\precsim G$ and suppose that $G$ satisfies the \textbf{group law} $\mu(x_1, . . . , x_n) =e_G$. Then $H$ satisfies the same law.
\end{proposition}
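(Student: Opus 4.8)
The plan is to test the law in $H$ on arbitrary elements, then transport the resulting single word back to $G$, where the law is known to hold, via the local homomorphism supplied by Lemma \ref{l21} applied in the direction $H\precsim G$.

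First I would fix a generating set $\h$ of $H$ and arbitrary elements $y_1,\dotsc,y_n\in H$; it suffices to show $\mu(y_1,\dotsc,y_n)=e_H$. Writing each $y_i=W(J_i,\rho_i;\h)$ for a suitable representative pair $(J_i,\rho_i)$ on $\h$ and substituting these into the reduced expression $\mu(f_1,\dotsc,f_n)=f_{a_1}^{\varepsilon_1}\dotsm f_{a_t}^{\varepsilon_t}$, the concatenation and inversion identities for the words $W(\cdot,\cdot;\mathfrak x)$ (valid for any generating tuple $\mathfrak x$) assemble a single representative pair $(K,\sigma)$ that depends only on $\mu$ and the pairs $(J_i,\rho_i)$, and not on the ambient group, with the property that $\mu(W(J_1,\rho_1;\mathfrak x),\dotsc,W(J_n,\rho_n;\mathfrak x))=W(K,\sigma;\mathfrak x)$ for every generating tuple $\mathfrak x$. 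In particular $\mu(y_1,\dotsc,y_n)=W(K,\sigma;\h)$.

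Next I would invoke Lemma \ref{l21} with the roles of $G$ and $H$ exchanged, which is legitimate since $H\precsim G$, feeding it the generating set $\h$ together with the finite family $\mathfrak F=\{(K,\sigma)\}$. This produces a generating set $\g$ of $G$ for which $\T hg$ is a local homomorphism on $\{e_G\}\cup\{W(K,\sigma;\g)\}$; by the basic properties of local homomorphisms it is in particular well defined there and sends $e_G$ to $e_H$. Because $G$ satisfies the law $\mu$, evaluating $\mu$ on the elements $W(J_i,\rho_i;\g)\in G$ gives $W(K,\sigma;\g)=\mu(W(J_1,\rho_1;\g),\dotsc,W(J_n,\rho_n;\g))=e_G$. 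Since $\T hg$ carries $W(K,\sigma;\g)$ to $W(K,\sigma;\h)$ and carries $e_G$ to $e_H$, and since $W(K,\sigma;\g)=e_G$, well-definedness forces $W(K,\sigma;\h)=e_H$, that is, $\mu(y_1,\dotsc,y_n)=e_H$, as desired.

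The one point demanding attention is the order of quantification: the generating set $\g$ of $G$ is manufactured by the lemma only after $y_1,\dotsc,y_n$, and hence $(K,\sigma)$, have been fixed. This causes no circularity, because $G$ satisfies $\mu$ as an identity, so the law holds for the particular elements $W(J_i,\rho_i;\g)$ whatever generating set the lemma happens to return. It then remains only to record two routine facts: that $(K,\sigma)$ genuinely falls within the range of pairs for which Lemma \ref{l21} delivers the local-homomorphism (hence triviality-transfer) property — with $\mathfrak F=\{(K,\sigma)\}$ one has $n_0=\n{K}$, so $(K,\sigma)\in S_2\subseteq S_1$ in the notation of that proof — and that the substitution bookkeeping producing $W(K,\sigma;\mathfrak x)$ is consistent across the two groups. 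I expect the main (though still mild) difficulty to be keeping this bookkeeping, and the swapped roles of $G$ and $H$, straight.
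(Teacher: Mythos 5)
Your proof is correct and follows essentially the same route as the paper: both reduce the evaluation of the law to a single representative pair and transfer its triviality through Lemma \ref{l21} applied in the swapped direction $H\precsim G$. The only difference is bookkeeping — the paper avoids your substitution step by adjoining the witnesses $h_1,\dotsc,h_n$ to a generating set of $H$ (so that $\mu(h_1,\dotsc,h_n)$ is literally $W(J,\rho;\h)$ for the canonical pair encoding $\mu$) and argues by contraposition, whereas you keep $\h$ fixed, write each $y_i$ as a word in $\h$, and concatenate these into the pair $(K,\sigma)$.
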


\begin{proof}
Suppose that {\footnotesize $\mu(x_1, . . . , x_n)=\prod_{i=1}^Nx_{J(i)}^{\rho(i)}$} for $N$-tuples $J$ and $\rho$ with {\footnotesize $J\in\{1,2,\dotsc,n\}^N$} and $\rho\in\{\pm1\}^N$. Also, suppose that $H$ does not satisfy in this group law, so there exists $h_1,\dotsc,h_n\in H$, such that $\mu(h_1,\dotsc,h_n)\neq e_H$. Let $\h_0$ be a generating set of $H$, so that $\h=(h_1,\dotsc,h_n)\oplus\h_0$ is also a generating set. By Notation \ref{not}, $W(J,\rho;\h)\neq e_H$, and by the above lemma, we can get a generating set $\g$ of $G$ such that $W(J,\rho;\g)\neq e_G$. This means that $\mu(g_1,\dotsc,g_n)\neq e_G$, which contradicts the group law in $G$.  
\end{proof}

Let $G$ be a group with a generating set $\g=(g_1,\dotsc,g_n)$. We say that representative pair $(J,\rho)$ on $\g$ is in \textit{$k$th derivation form} if, for the free non-Abelian group of rank $n>0$, $\textbf{F}_n$, with generating set $\f=(f_1,\dotsc,f_n)$, $W(J,\rho;\f)\in\textbf{F}_n^{(k)}$, in which the power $(k)$ stands for denoting the $k$th derived subgroup. We have: 
\begin{lemma}
Let $G$ be a group with a generating set $\g=(g_1,\dotsc,g_n)$. Then 
$$G^{(k)}=\{W(J,\rho;\g): (J,\rho)\,\text{is a representative pair in $k$th derivation form}\}.$$ 
\end{lemma}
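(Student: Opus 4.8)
The plan is to realize both sides of the asserted equality as images under the canonical epimorphism from the free group, and then to invoke the behaviour of the derived series under surjective homomorphisms. Since $\g=(g_1,\dotsc,g_n)$ generates $G$, the universal property of $\mathbf{F}_n$ furnishes a unique epimorphism $\pi\colon\mathbf{F}_n\to G$ with $\pi(f_i)=g_i$ for each $i$. Because $\pi$ is a homomorphism and each word $W(J,\rho;\cdot)$ is assembled by multiplying generators and their inverses, one obtains the central compatibility $\pi(W(J,\rho;\f))=W(J,\rho;\g)$ for every representative pair $(J,\rho)$; this identity is the bridge between $\mathbf{F}_n$ and $G$ on which everything else rests.

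With this in hand the containment $\supseteq$ is immediate. If $(J,\rho)$ is in $k$th derivation form, then by definition $W(J,\rho;\f)\in\mathbf{F}_n^{(k)}$, so $W(J,\rho;\g)=\pi(W(J,\rho;\f))\in\pi(\mathbf{F}_n^{(k)})$; to place this in $G^{(k)}$ I only need the identity $\pi(\mathbf{F}_n^{(k)})=G^{(k)}$. Symmetrically, for the reverse containment $\subseteq$, I would take an arbitrary $g\in G^{(k)}$, use the same identity to write $g=\pi(w)$ with $w\in\mathbf{F}_n^{(k)}$, and then express $w$ as a reduced word in $f_1,\dotsc,f_n$; this exhibits a representative pair $(J,\rho)$ with $w=W(J,\rho;\f)$. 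Since $w\in\mathbf{F}_n^{(k)}$, the pair $(J,\rho)$ is in $k$th derivation form, and $g=\pi(w)=W(J,\rho;\g)$ lies in the right-hand set, closing the equality.

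The one step carrying actual content, and the place I expect to spend effort, is the identity $\pi(\mathbf{F}_n^{(k)})=G^{(k)}$, namely that an epimorphism carries the $k$th derived subgroup \emph{onto} the $k$th derived subgroup. I would prove the general statement that every epimorphism $\psi\colon A\to B$ satisfies $\psi(A^{(k)})=B^{(k)}$ by induction on $k$. The base case $k=0$ is just surjectivity. For the step, note that $\psi$ sends commutators to commutators, so $\psi(A')\subseteq B'$; conversely, since $\psi$ is onto, each generator $[\psi(a),\psi(b)]$ of $B'$ equals $\psi([a,b])$, giving $B'\subseteq\psi(A')$ and hence $\psi(A')=B'$. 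The subtle point, which must be justified carefully before applying the $k=1$ case, is that the restriction $\psi|_{A^{(k-1)}}$ is genuinely an epimorphism onto $B^{(k-1)}$; this is exactly the inductive hypothesis $\psi(A^{(k-1)})=B^{(k-1)}$. Applying the commutator identity to this restriction yields $\psi(A^{(k)})=\psi\bigl((A^{(k-1)})'\bigr)=(B^{(k-1)})'=B^{(k)}$, completing the induction and thereby the lemma.
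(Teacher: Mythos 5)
Your proof is correct, and it reaches the conclusion by a cleaner, more structural route than the paper's. Both arguments rest on the same bridge --- the substitution epimorphism $\pi\colon\mathbf{F}_n\to G$, $f_i\mapsto g_i$, with $\pi(W(J,\rho;\f))=W(J,\rho;\g)$; this is exactly what the paper means by its opening remark that equalities of words in $\mathbf{F}_n$ pass to $G$. Where you diverge is in how the derived subgroup is handled. The paper works by hand: it introduces the concatenation notation $[J,I]$, $[\rho,\delta]$ so that a product of commutators of words is literally a single word whose representative pair is in first derivation form, proves the case $k=1$ explicitly in both directions, and leaves the induction on $k$ to the reader. You instead isolate the general fact that an epimorphism $\psi\colon A\to B$ satisfies $\psi(A^{(k)})=B^{(k)}$ (proved by a clean induction whose only content is the $k=1$ commutator computation), after which both inclusions become one-line formalities: $\supseteq$ from $\pi(\mathbf{F}_n^{(k)})\subseteq G^{(k)}$, and $\subseteq$ from surjectivity of $\pi|_{\mathbf{F}_n^{(k)}}$ onto $G^{(k)}$ together with the fact that every element of $\mathbf{F}_n$ is some $W(J,\rho;\f)$. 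Your version buys a complete treatment of general $k$ without the bookkeeping, and it makes transparent that the paper's forward direction (every $g\in G^{(1)}$ is a product of commutators of words) is just surjectivity of $\pi$ restricted to derived subgroups. One cosmetic point: for $w=e_{\mathbf{F}_n}$ the \emph{reduced} word is empty and admits no representative pair of positive length, so you should represent the identity by a trivially cancelling word such as $f_1f_1^{-1}$, which lies in every $\mathbf{F}_n^{(k)}$; the same convention is implicit in the paper's argument.
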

\begin{proof}
Let $\f$ be a generating set of $\textbf F_n$. Since there are no relations in $\mathbb F_n$, the equality $W(J_1,\rho_1;\f)=W(J_2,\rho_2;\f)$ implies $W(J_1,\rho_1;\g)=W(J_2,\rho_2;\g)$ for the generating set $\g$. \par
For representative pairs $(J,\rho)$ and $(I,\delta)$, we denote $J^{-1}\oplus I^{-1}\oplus J\oplus I$ and $\rho^{-1}\oplus \delta^{-1}\oplus \rho\oplus \delta$ by $[J,I]$ and $[\rho,\delta]$, respectively. By these notations, 
$$[W(J,\rho;g),W(I,\delta;\g)]=W([J,I],[\rho,\delta];\g)$$
where $[x,y]=x^{-1}y^{-1}xy$, $x,y\in G$.\par 
We only prove the lemma in the case where $k=1$. For larger values of $k$ one can use induction. First, suppose that $g\in G^{(1)}$; so there are representative pairs $(J_i,\rho_i)$ and $(I_i,\delta_i)$, $i=1,\dotsc,m$, such that 
\begin{align*}
g&=\prod_{i=1}^m[W(J_i,\rho_i;\g),W(I_i,\delta_i;\g)]\\
&=\prod_{i=1}^mW([J_i,I_i],[\rho_i,\delta_i];\g)\\
&=W\left(\bigoplus_{i=1}^m[J_i,I_i],\bigoplus_{i=1}^m[\rho_i,\delta_i];\g\right);
\end{align*}
but it is clear that $(\bigoplus_{i=1}^m[J_i,I_i],\bigoplus_{i=1}^m[\rho_i,\delta_i])$ is in the first derivation form.
\par Conversely, suppose that $(J,\rho)$ is in the first derivation form, so, by an argument as above, we have 
$$W(J,\rho;\f)=W\left(\bigoplus_{i=1}^m[J_i,I_i],\bigoplus_{i=1}^m[\rho_i,\delta_i];\f\right)$$
for representative pairs $(J_i,\rho_i)$ and $(I_i,\delta_i)$, $i=1,\dotsc,m$. By the note mentioned at the beginning of the proof, the following holds:
\begin{align*}
W(J,\rho;\g)&=W\left(\bigoplus_{i=1}^m[J_i,I_i],\bigoplus_{i=1}^m[\rho_i,\delta_i];\g\right)\\
&=\prod_{i=1}^m[W(J_i,\rho_i;g),W(I_i,\delta_i;\g)]\in G^{(1)}.
\end{align*}
\end{proof}
Configurations show that a group is not soluble with derived length $k$, for a positive integer $k$:
\begin{proposition}
Let $G$ be a group such that $G^{(k)}\neq\{e_G\}$, for a positive integer $k$.  Then, for each generating set $\g$ of $G$, there is a partition $\E$ of $G$, such that the configuration set $\Con(\g,\E)$ cannot arise from a soluble group of derived length $k$.
\end{proposition}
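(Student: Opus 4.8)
The plan is to build a partition that isolates one nontrivial element of $G^{(k)}$ from the identity, and then to show that any configuration-equal soluble group would be forced to contain a nontrivial element in its own $k$th derived subgroup.

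First I would apply the preceding lemma: since $G^{(k)}\neq\{e_G\}$ there is a representative pair $(J,\rho)$ on $\g$ in $k$th derivation form with $W(J,\rho;\g)\neq e_G$. Setting $\mathfrak F:=\{(J,\rho)\}$ and $n_0:=\n J$, I would form $S_0,S_1,S_2$ as in the proof of Lemma \ref{l21}, let $\A$ be the $\sigma$-algebra generated by $E(1)=\{e_G\}$ and the singletons $E(J',\rho')=\{W(J',\rho';\g)\}$ for $(J',\rho')\in S_0$, and take $\E:=\atom A$. The decisive feature of this partition is that $E(1)$ and $E(J,\rho)$ are distinct singleton atoms, so the identity is separated from the chosen element of $G^{(k)}$.

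Next, arguing by contradiction, suppose $\con gE=\con hF$ for some generating set $\h$ and partition $\F$ of a soluble group $H$ of derived length $k$. Writing $\B:=\sigma(\F)$ we have $\con gA=\con hB$, and by Remark \ref{remark 1} I may assume $e_H\in F(1)$, with $E(1)\leftrightsquigarrow F(1)$ and $E(J',\rho')\leftrightsquigarrow F(J',\rho')$. The crucial point is that the induction proving equation (\ref{eq1}) in Lemma \ref{l21} draws only on the single equality $\con gA=\con hB$ and on Lemma \ref{important}(b); it therefore runs unchanged for the externally given $\h$ and yields $F(J',\rho')=W(J',\rho';\h)\,F(1)$ for every $(J',\rho')\in S_1$. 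Since $(J,\rho)\in S_2\subseteq S_1$, this gives $W(J,\rho;\h)\in F(J,\rho)$.

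To finish, observe that $W(J,\rho;\g)\neq e_G$ makes $E(1)$ and $E(J,\rho)$ distinct atoms, hence $F(1)$ and $F(J,\rho)$ are disjoint parts of $\F$; as $e_H\in F(1)$, this forces $W(J,\rho;\h)\neq e_H$. But $(J,\rho)$ is in $k$th derivation form, so the preceding lemma applied to $\h$ places $W(J,\rho;\h)$ in $H^{(k)}$, whence $H^{(k)}\neq\{e_H\}$, contradicting that a soluble group of derived length $k$ satisfies $H^{(k)}=\{e_H\}$. I expect the main obstacle to be the middle step: one must verify that the transfer of equation (\ref{eq1}) depends only on the one configuration equality at hand, rather than on the global containment $G\precsim H$ under which Lemma \ref{l21} is phrased, and that the relevant singletons genuinely persist as atoms of $\A$ so that Lemma \ref{important}(b) is available throughout the induction.
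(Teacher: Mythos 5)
Your proposal is correct and follows essentially the same route as the paper: the paper likewise takes $(J_0,\rho_0)$ in $k$th derivation form with $W(J_0,\rho_0;\g)\neq e_G$, builds a partition containing $\{e_G\}$ and the singletons $\{W(J,\rho;\g)\}$ for $(J,\rho)\in S_0$, and invokes Remark \ref{local homomorphism and refinements} — which is precisely the observation you re-derive by hand, namely that the induction establishing equation (\ref{eq1}) needs only the single equality $\con gE=\con hF$ rather than the full containment $G\precsim H$. Your explicit verification of that point, and of the disjointness of the atoms $F(1)$ and $F(J,\rho)$, fills in exactly what the paper delegates to that remark.
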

\begin{proof}
Since $G^{(k)}\neq\{e_G\}$, there exists a representative pair, $(J_0,\rho_0)$, in $k$th derivation form such that $W(J_0,\rho_0;\g)\neq e_G$. Set, as in the proof of Lemma \ref{l21}, 
$$S_0:=\{(J,\rho):\, \n J\leq 3\n{J_0},\rho\,\text{is arbitrary}\}.$$
Let $\E$ be any partition which contains $\{e_G\}$ and singletons $\{W(J,\rho;\g)\}$, for $(J,\rho)\in S_0$. Then, by Remark \ref{local homomorphism and refinements}, if $\con gE=\con hF$ for a configuration pair $(\h,\F)$ of a group $H$, then $W(J_0,\rho_0;\h)\neq e_H$. But, $W(J_0,\rho_0;\h)\in H^{(k)}$, for $(J_0,\rho_0)$ is in $k$th derivation form, whence $H^{(k)}\neq\{e_H\}$. 
\end{proof}
We also answer Question 1 in \cite{try} affirmatively:
\begin{theorem}
Let $G$ and $H$ be two groups such that $G\approx H$. Then $G^{(k)}$ and $H^{(k)}$ have same cardinalities, for each positive integer $k$. Furthermore, if $G^{(k)}$ is finite for some positive integer $k$, then we will have $G^{(k)}\cong H^{(k)}$. 
\end{theorem}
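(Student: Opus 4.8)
The plan is to reduce both assertions to repeated applications of Lemma \ref{l21}, exploiting the characterisation $G^{(k)}=\{W(J,\rho;\g):(J,\rho)\text{ in }k\text{th derivation form}\}$ proved above, together with its analogue for $H$; the key point is that this description is intrinsic to the group and does \emph{not} depend on the generating set, so an element of $H^{(k)}$ may be written as a $k$th-derivation-form word in whatever generating set Lemma \ref{l21} produces. Since $G\approx H$ furnishes both $G\precsim H$ and $H\precsim G$, and since finitely generated groups are countable, for the cardinality claim it suffices to exhibit, for every finite subfamily of $G^{(k)}$, an equal number of distinct elements of $H^{(k)}$, and then appeal to symmetry.

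First I would fix finitely many elements $a_i=W(J_i,\rho_i;\g)\in G^{(k)}$, $i=1,\dots,M$, with each $(J_i,\rho_i)$ in $k$th derivation form, and apply Lemma \ref{l21} to $G\precsim H$ with $\mathfrak F=\{(J_i,\rho_i):i\le M\}$. This yields a generating set $\h$ of $H$ for which $\T gh$ is a local homomorphism on $\{e_H\}\cup\{W(J,\rho;\h):(J,\rho)\in\mathfrak F\}$, with $\T gh\bigl(W(J_i,\rho_i;\h)\bigr)=a_i$. Writing $b_i:=W(J_i,\rho_i;\h)$, each $b_i$ lies in $H^{(k)}$ by the characterisation applied to $H$; moreover the $b_i$ are pairwise distinct, because $\T gh$ is single-valued and the $a_i$ are distinct, so $b_i=b_j$ would force $a_i=a_j$. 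Hence $|H^{(k)}|\ge M$ for every $M\le|G^{(k)}|$, giving $|G^{(k)}|\le|H^{(k)}|$; the reverse inequality follows by the same argument applied to $H\precsim G$, and so the two cardinalities coincide.

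For the isomorphism statement, assume $N:=|G^{(k)}|<\infty$, so $|H^{(k)}|=N$ by the previous step. I would enlarge $\mathfrak F$ to contain, besides the pairs $(J_i,\rho_i)$ exhausting $G^{(k)}$, all inverses $(J_i^{-1},\rho_i^{-1})$ and all products $(J_i\oplus J_j,\rho_i\oplus\rho_j)$, and again invoke Lemma \ref{l21}. Now the $N$ distinct elements $b_i\in H^{(k)}$ must exhaust $H^{(k)}$, so $\Phi:G^{(k)}\to H^{(k)}$, $\Phi(a_i)=b_i$, is a bijection. To see it is a homomorphism, note that since $\{b_i\}=H^{(k)}$ is closed under multiplication we have $b_ib_j=b_l$ for a unique $l$, while the local-homomorphism property of $\T gh$ (using that $(J_i,\rho_i)$, $(J_j^{-1},\rho_j^{-1})$ and their concatenation all lie in the domain) gives $\T gh(b_ib_j)=a_ia_j$. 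Comparing with $\T gh(b_l)=a_l$ forces $a_l=a_ia_j$, so $l$ is exactly the index with $a_ia_j=a_l$, whence $\Phi(a_ia_j)=b_l=b_ib_j$. Thus $\Phi$ is an isomorphism.

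The main obstacle I anticipate is the asymmetry of Lemma \ref{l21}: it only produces a one-sided local homomorphism $\T gh$, and the generating sets it returns in the two directions need not match, so one cannot simply manufacture a two-sided local isomorphism and read off $G^{(k)}\cong H^{(k)}$. The device that sidesteps this is to settle the cardinality first; knowing $|H^{(k)}|=N$ is precisely what forces the injected images $b_i$ to fill $H^{(k)}$, after which a \emph{single} local homomorphism is enough to promote the set bijection $\Phi$ to a group isomorphism. The only genuinely fiddly point is the bookkeeping guaranteeing that the relevant products and inverses lie in the finite set on which $\T gh$ is asserted to be a local homomorphism, which is why $\mathfrak F$ must be closed under the operations used in the final computation.
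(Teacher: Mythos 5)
Your proposal is correct and follows essentially the same route as the paper: use the generating-set-independent description of $G^{(k)}$ via representative pairs in $k$th derivation form, transfer finitely many distinct such words through Lemma \ref{l21} to get the cardinality bound in each direction, and in the finite case observe that the transferred elements must exhaust $H^{(k)}$ so that the local homomorphism restricts to an isomorphism. The paper phrases the last step as "$\vartheta^{\mathfrak g}_{\mathfrak h}|_{H^{(k)}}$ is an isomorphism" by invoking the remark that a local homomorphism on a finite subgroup is a genuine homomorphism, while you make the same point by explicitly closing $\mathfrak F$ under concatenation and inversion; this is only a difference in bookkeeping, not in substance.
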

\begin{proof}
Let $\g$ be a generating set of $G$. Suppose that $|G^{(k)}|\geq N$ for a positive integer $N$. Then there are representative pairs $(J_i,\rho_i)$, $i=1,\dotsc,N$, in $k$th derivation form, such that $W(J_i,\rho_i;\g)$'s are pairwise distinct. By Lemma \ref{l21}, we can find a generating set $\h$ of $H$ such that $W(J_i,\rho_i;\h)$'s are pairwise distinct, but by previous lemma, $W(J_i,\rho_i;\h)\in H^{(k)}$, so $|H^{(k)}|\geq N$. Therefore, $G^{(k)}$ and $H^{(k)}$ have same cardinalities.\par
Now, suppose that $G^{(k)}$ is finite; consider representative pairs $(J_i,\rho_i)$, $i=1,\dotsc,N$, in $k$th derivation form, such that elements $W(J_i,\rho_i;\g)$'s are non-identity and pairwise distinct in $G^{(k)}$. By Lemma \ref{l21}, we can choose a generating set $\h$ of $H$ such that $W(J_i,\rho_i;\h)$'s are non-identity and pairwise distinct and $\T gh$ is a local homomorphism on 
$$\{e_H\}\cup\{W(J_i,\rho_i;\g):i=1,\dotsc,N\}.$$ 
But, by the first part of the statement, we should have 
$$H^{(k)}=\{e_H\}\cup\{W(J_i,\rho_i;\h):\,i=1,\dotsc,N\}.$$
Therefore, $\T gh|_{H^{(k)}}$ is indeed an isomorphism, and this completes the proof.
\end{proof}
As a consequence of this theorem we have:
\begin{corollary}
Let $G$ and $H$ be two groups such that $G\approx H$. Then $G$ is soluble if and only if  $H$ is soluble. Furthermore, their derived lengths are the same.
\end{corollary}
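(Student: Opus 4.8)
The plan is to reduce everything to the preceding Theorem together with the standard description of the derived series. Recall that a group $G$ is soluble exactly when its derived series terminates, that is, when $G^{(k)}=\{e_G\}$ for some positive integer $k$, and that the derived length of a soluble group is the least such $k$. Since $\{e_G\}$ is the unique one-element subgroup, this condition can be rephrased purely numerically: $G^{(k)}=\{e_G\}$ if and only if $|G^{(k)}|=1$.

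With this reformulation in hand, the argument is immediate. First I would invoke the Theorem: because $G\approx H$, we have $|G^{(k)}|=|H^{(k)}|$ for every positive integer $k$. Consequently $|G^{(k)}|=1$ if and only if $|H^{(k)}|=1$, which by the previous paragraph says that $G^{(k)}=\{e_G\}$ if and only if $H^{(k)}=\{e_H\}$.

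Reading off the consequences finishes the proof. On the one hand, $G$ is soluble precisely when $G^{(k)}=\{e_G\}$ for some $k$, and by the equivalence just established this holds precisely when $H^{(k)}=\{e_H\}$ for the same $k$; hence $G$ is soluble if and only if $H$ is. On the other hand, when both are soluble, the derived length of $G$ is the least $k$ with $G^{(k)}=\{e_G\}$ and the derived length of $H$ is the least $k$ with $H^{(k)}=\{e_H\}$; since these two conditions hold for exactly the same set of integers $k$, the two least values agree and the derived lengths coincide.

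I do not expect any genuine obstacle here, as the Theorem supplies the essential content. The one point to keep in mind is that solubility and derived length are detected solely through the cardinality-one condition, so only the equal-cardinalities part of the Theorem is needed; the finer isomorphism statement $G^{(k)}\cong H^{(k)}$ plays no role in this corollary.
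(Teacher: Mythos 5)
Your proof is correct and is exactly the argument the paper intends: the corollary is stated without proof as an immediate consequence of the preceding theorem, and your reduction of solubility and derived length to the condition $|G^{(k)}|=1$, combined with the equal-cardinalities part of that theorem, is the right (and only needed) step.
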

Now, we will show that being FC can be recovered by configuration sets. The following remark will play a crucial role:
\begin{remark}
\label{remark}
Let $G$ be a group with a generating set $\g$. For $g\in G$, put 
$$\Phi_g:G\rightarrow G,\quad x\mapsto gxg^{-1}$$
and $\inn G:=\{\Phi_g:\,g\in G\}$. It is well-known that $G/Z(G)\cong\inn G$, where $Z(G)$ stands for the center of $G$. For representative pairs $(J_i,\rho_i)$, $i=1,2$, $\Phi_{W(J_1,\rho_1,\g)}\neq \Phi_{W(J_2,\rho_2;\g)}$ if and only if there is a representative pair $(I,\delta)$ such that 
$$\Phi_{W(J_1,\rho_1,\g)}(W(I,\delta;\g))\neq \Phi_{W(J_2,\rho_2;\g)}(W(I,\delta;\g))$$
and one can easily check that the last inequality is equivalent to the following one
\footnotesize
$$W(J_1\oplus I\oplus J_1^{-1},\rho_1\oplus \delta\oplus\sigma_1^{-1};\g)\neq 
W(J_2\oplus I\oplus J_2^{-1},\rho_2\oplus\delta\oplus\sigma_2^{-1};\g).$$
\normalsize
\end{remark}
Now, we assert the main result of the section:
\begin{theorem}
\label{l26}
Let $G$ and $H$ be two groups such that $G\approx H$. Then $\inn G$ and $\inn H$ have same cardinalities. Moreover, if $\inn G$ is finite, then  $\inn G\cong\inn H$.
\end{theorem}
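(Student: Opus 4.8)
The plan is to follow the pattern of the preceding theorem on $G^{(k)}$: I will use Remark~\ref{remark} to convert statements about inner automorphisms into (in)equalities between explicit words $W(J,\rho;\g)$, and then use Lemma~\ref{l21} to transport them along $G\approx H$. Fix a generating set $\g=(g_1,\dotsc,g_n)$ of $G$; all conjugation words below carry the second components dictated by Remark~\ref{remark}, which I suppress and write as $\,\cdot\,$. For the cardinality statement, suppose $|\inn G|\geq N$ and list $N$ distinct inner automorphisms $\Phi_{W(J_i,\rho_i;\g)}$, $1\le i\le N$. For each $i\neq j$ Remark~\ref{remark} gives a witness $(I_{ij},\delta_{ij})$ realising $\Phi_{W(J_i,\rho_i;\g)}\neq\Phi_{W(J_j,\rho_j;\g)}$ as an inequality $W(J_i\oplus I_{ij}\oplus J_i^{-1},\cdot\,;\g)\neq W(J_j\oplus I_{ij}\oplus J_j^{-1},\cdot\,;\g)$. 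Collecting the finitely many representative pairs appearing here into $\mathfrak F$ and invoking Lemma~\ref{l21} (using $G\precsim H$) yields a generating set $\h$ of $H$ on which $\T gh$ is a local homomorphism; since a local homomorphism is a well-defined map $W(J,\rho;\h)\mapsto W(J,\rho;\g)$, equal $\h$-words have equal images, so each $\g$-inequality above forces the matching $\h$-inequality. By Remark~\ref{remark} the latter means $\Phi_{W(J_i,\rho_i;\h)}\neq\Phi_{W(J_j,\rho_j;\h)}$, whence $|\inn H|\geq N$; by symmetry $|\inn G|=|\inn H|$.

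Now assume $\inn G$ is finite with exactly $N$ elements $\Phi_{W(J_i,\rho_i;\g)}$. Pick witnesses $(I_{ij},\delta_{ij})$ separating every pair of distinct elements as above, and take $\mathfrak F$ large enough to also contain, for all $i,j$ and $1\le s\le N$ and every chosen witness $(I,\delta)$, the product pairs $(J_i\oplus J_j,\rho_i\oplus\rho_j)$ and the conjugation pairs $((J_i\oplus J_j)\oplus I\oplus(J_i\oplus J_j)^{-1},\cdot)$ and $(J_s\oplus I\oplus J_s^{-1},\cdot)$; this set is still finite. Applying Lemma~\ref{l21} with this $\mathfrak F$ produces $\h$ as before, and by the cardinality part the $N$ automorphisms $\Phi_{W(J_i,\rho_i;\h)}$ are pairwise distinct, hence constitute all of $\inn H$. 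Define $\psi:\inn H\to\inn G$ by $\psi(\Phi_{W(J_i,\rho_i;\h)})=\Phi_{W(J_i,\rho_i;\g)}$; this is a bijection, and it remains to see it preserves multiplication.

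Fix $i,j$. In $\inn G$ the product $\Phi_{W(J_i,\rho_i;\g)}\Phi_{W(J_j,\rho_j;\g)}=\Phi_{W(J_i\oplus J_j,\rho_i\oplus\rho_j;\g)}$ equals exactly one member $\Phi_{W(J_l,\rho_l;\g)}$ of $\inn G$, so for every $l'\neq l$ the witness separating $\Phi_{W(J_l,\rho_l;\g)}$ from $\Phi_{W(J_{l'},\rho_{l'};\g)}$ yields a $\g$-inequality between the conjugation words built from $J_i\oplus J_j$ and from $J_{l'}$. Transporting these finitely many inequalities to $H$ exactly as in the first paragraph gives $\Phi_{W(J_i\oplus J_j,\rho_i\oplus\rho_j;\h)}\neq\Phi_{W(J_{l'},\rho_{l'};\h)}$ for all $l'\neq l$. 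As this product lies in $\inn H=\{\Phi_{W(J_m,\rho_m;\h)}\}$, elimination forces it to equal $\Phi_{W(J_l,\rho_l;\h)}$. Thus the $(i,j)$ product is the $l$th element in both groups, so $\psi$ is an isomorphism and $\inn G\cong\inn H$.

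I expect the main obstacle to be precisely that \emph{equality} of inner automorphisms is an infinitary condition, quantifying over all conjugation words, so the local homomorphism cannot transport equalities in the direction $G\to H$. The finiteness of $\inn G$ is what circumvents this: it lets me encode ``the product is the $l$th element'' as the finite conjunction of inequalities ``the product differs from every $l'\neq l$'', and these inequalities \emph{do} transfer along $\precsim$. Checking that the chosen witnesses genuinely separate all the relevant pairs (including those involving products) and that every needed conjugation word has been placed in $\mathfrak F$ is the bookkeeping one must carry out with care.
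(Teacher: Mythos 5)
Your proof is correct and follows essentially the same route as the paper: use Remark~\ref{remark} to encode distinctness of inner automorphisms as inequalities of conjugation words, collect the finitely many witnesses into $\mathfrak F$, and transport the inequalities to $H$ via the local homomorphism from Lemma~\ref{l21}. The only difference is that you explicitly verify multiplicativity of the bijection $\Phi_{W(J_i,\rho_i;\h)}\mapsto\Phi_{W(J_i,\rho_i;\g)}$ by enlarging $\mathfrak F$ with the product and conjugation pairs and running the elimination argument, a step the paper's proof asserts without detail; this is a welcome tightening rather than a departure.
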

\begin{proof}
Suppose that $|\inn G|\geq N$, for a positive integer $N$. So, there are representative pairs $(J_k,\rho_k)$, $k=1,\dotsc,N$, such that $\Phi_{W(J_k,\rho_k;\g)}$'s are pairwise distinct. By the above remark, for each $k=2,\dotsc,N$, there exist representative pairs, $(I_{k,l},\delta_{k,l})$, $l=1,\dotsc,k-1$, such that 
\footnotesize
\begin{equation}
\label{2}
W(J_k\oplus I_{k,l}\oplus J_k^{-1},\rho_k\oplus \delta_{k,l}\oplus\rho_k^{-1};\g)\neq 
W(J_l\oplus I_{k,l}\oplus J_l^{-1},\rho_l\oplus\delta_{k,l}\oplus\rho_l^{-1};\g)
\end{equation}
\normalsize
Let $\mathfrak{F}$  be a set of below representative pairs, 
$$(J_k,\rho_k)\quad k=1,\dotsc,N$$
along with
\begin{align*}
\begin{cases}
(J_k\oplus I_{k,l}\oplus J_k^{-1},\rho_k\oplus \delta_{k,l}\oplus\rho_k^{-1})\\
(J_l\oplus I_{k,l}\oplus J_l^{-1},\rho_l\oplus\delta_{k,l}\oplus\rho_l^{-1})
\end{cases}\quad k=2,\dotsc,N,\,l=1,\dotsc,k-1.
\end{align*}
Applying Lemma \ref{l21} to $\mathfrak F$, we gain a generating set $\h$ of $H$ such that (\ref{2}) is satisfied for $\h$ instead of $\g$. But, again, Remark \ref{remark} gives that $\Phi_{W(J_k,\rho_k;\h)}$'s are pairwise distinct, so we have $|\inn H|\geq N$, this prove the first part of the Lemma. \par
Now, suppose that $\inn G$ is finite, say 
$$\inn G=\{\Phi_{e_G}\}\cup\{\Phi_{W(J_k,\rho_k;\g)}:\,k=1,\dotsc,N\}.$$
As done earlier, for each $k=1,\dotsc,N$, choose $(I_{k,l},\delta_{k,l})$, $l=1,\dotsc,k-1$, such that 
\footnotesize
\begin{equation*}
W(J_k\oplus I_{k,l}\oplus J_k^{-1},\rho_k\oplus \delta_{k,l}\oplus\rho_k^{-1};\g)\neq 
W(J_l\oplus I_{k,l}\oplus J_l^{-1},\rho_l\oplus\delta_{k,l}\oplus\rho_l^{-1};\g)
\end{equation*}
\normalsize
Construct $\mathfrak{F}$ as above and apply Lemma \ref{l21} to $\mathfrak{F}$ to obtain a generating set $\h$ of $H$, such that (\ref{2}) is satisfied for $\h$ instead of $\g$ and $\T gh$ is a local homomorphism on $$\{e_G\}\cup\{W(J,\rho;\h): (J,\rho) \in\mathfrak{F}\}.$$
Therefore, 
$$\inn H=\{\Phi_{e_G}\}\cup\{\Phi_{W(J_k,\rho_k;\h)}:\,k=1,\dotsc,N\}$$
and $$\Theta:\inn H\rightarrow\inn G,\quad \Phi_{W(J_k,\rho_k;\h)}\mapsto \Phi_{W(J_k,\rho_k;\g)}$$ introduces a desired isomorphism.
\end{proof}

\begin{corollary}
Assume that $G$ and $H$ are two finitely generated groups such that $G$ is an FC-group and $G\approx H$. Then $H$ is an FC-group and the following hold:
\begin{enumerate}
\item $G\times \mathbb{Z}\cong H\times\mathbb{Z}$,
\item $\frac{G}{Z(G)}\cong\frac{H}{Z(H)}$ and $Z(G)\cong Z(H)$,
\item $\frac{G}{G'}\cong\frac{H}{H'}$ and $G'\cong H'$.
\end{enumerate}
\end{corollary}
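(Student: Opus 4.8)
The plan is to reduce the whole corollary to part (1), and to prove (1) by combining a classical cancellation theorem with the fact that configuration equivalence preserves finite quotients.

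First I would settle that $H$ is an FC-group, using the characterization of the FC property for finitely generated groups. If $K=\langle k_1,\dots,k_n\rangle$ is finitely generated, then $Z(K)=\bigcap_{i=1}^n C_K(k_i)$, so $K$ is an FC-group if and only if each centralizer $C_K(k_i)$ has finite index, if and only if $[K:Z(K)]<\infty$, if and only if $\inn{K}$ is finite (recall $K/Z(K)\cong\inn{K}$). Since $G$ is a finitely generated FC-group, $\inn{G}$ is finite; by Theorem \ref{l26}, $\inn{G}$ and $\inn{H}$ have the same cardinality, so $\inn{H}$ is finite, and as $H$ is finitely generated it is an FC-group.

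Next I would observe that both (2) and (3) follow formally once (1) is established. Indeed, taking centers in $G\times\mathbb{Z}\cong H\times\mathbb{Z}$ gives $Z(G)\times\mathbb{Z}\cong Z(H)\times\mathbb{Z}$; passing to commutator subgroups gives $G'\cong H'$, since $(G\times\mathbb{Z})'=G'$; passing to abelianizations gives $(G/G')\times\mathbb{Z}\cong (H/H')\times\mathbb{Z}$, since $(G\times\mathbb{Z})/(G\times\mathbb{Z})'\cong (G/G')\times\mathbb{Z}$; and passing to central quotients gives $G/Z(G)\cong H/Z(H)$, since $(G\times\mathbb{Z})/Z(G\times\mathbb{Z})\cong G/Z(G)$. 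Because $Z(G)$, $Z(H)$, $G/G'$ and $H/H'$ are finitely generated abelian groups, and such groups admit cancellation of a free direct factor $\mathbb{Z}$ (one simply equates free ranks and torsion subgroups), these isomorphisms yield $Z(G)\cong Z(H)$ and $G/G'\cong H/H'$, completing (2) and (3).

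It remains to prove (1), which I expect to be the main obstacle. The first ingredient is Schur's theorem: since $G$, and by the above $H$, is finitely generated FC we have $[G:Z(G)]<\infty$, hence $G'$ is finite; thus $G$ and $H$ lie in the class of finitely generated groups with finite commutator subgroup. The second ingredient is that $G\approx H$ forces $G$ and $H$ to have the same isomorphism classes of finite quotients, which is the finiteness characterization established in \cite{arw}; this gives isomorphic profinite completions and, in particular, places $G$ and $H$ in the same genus in Warfield's sense. The plan is then to invoke the cancellation theorem of Warfield's genus theory for finitely generated groups with finite commutator subgroup, namely that groups in the same genus become isomorphic after taking a direct product with $\mathbb{Z}$; applied to $G$ and $H$ this yields $G\times\mathbb{Z}\cong H\times\mathbb{Z}$, and the extra $\mathbb{Z}$ is exactly what genus theory predicts cannot in general be removed.

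The delicate step, and the one requiring the most care, is verifying that configuration equivalence supplies a legitimate input for the cancellation theorem: one must check that it matches the local structure of $G$ and $H$ at every prime, not merely their abstract finite-quotient sets. A more self-contained alternative for (1) would build the isomorphism directly from local homomorphisms: since a finitely generated FC-group is virtually free abelian, it is finitely presented and Hopfian, so one could feed a finite presentation of $G\times\mathbb{Z}$ into Lemma \ref{l21} to obtain a surjection and then use Hopficity to upgrade it to an isomorphism, in the spirit of Theorem \ref{last}; carrying this out for $G\times\mathbb{Z}$ rather than $G$ is precisely what makes the companion factor $\mathbb{Z}$ unavoidable.
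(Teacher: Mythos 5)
Your first step ($H$ is FC) is exactly the paper's: Theorem \ref{l26} makes $\inn H\cong H/Z(H)$ finite, and the centralizer characterization for finitely generated groups does the rest. Your reduction of (2) and (3) to (1) by taking centers, commutator subgroups, abelianizations and central quotients of $G\times\mathbb{Z}\cong H\times\mathbb{Z}$, and then cancelling $\mathbb{Z}$ inside finitely generated abelian groups, is also sound (the paper instead gets $G/Z(G)\cong H/Z(H)$ directly from Theorem \ref{l26} and quotes \cite{try} for the rest, but your route is legitimate).

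The genuine gap is in part (1), and you have flagged it yourself without closing it. The chain you propose is: $G\approx H$ gives the same finite quotients, hence the same genus, hence $G\times\mathbb{Z}\cong H\times\mathbb{Z}$ by a cancellation theorem for finitely generated groups with finite commutator subgroup. Two links are unverified: that ``same finite quotients'' places $G$ and $H$ in the same genus in the sense required by such a theorem (genus for this class is defined via localizations, and passing from finite quotients to local data is not formal), and that the cancellation theorem holds with a single factor $\mathbb{Z}$ in this generality. Since you explicitly defer ``the delicate step,'' the proof of (1) is not complete. The paper avoids this entirely: it uses that a finitely generated FC-group embeds in $\mathbb{Z}^n\times F$ and then cites \cite[Lemma 1]{try} and \cite[Theorem 2]{try}, which are results proved specifically for configuration-equivalent groups of that form; no appeal to genus theory is made. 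Your fallback alternative also does not work as stated: Lemma \ref{l21} applies to the pair $(G,H)$ because $G\approx H$, but nothing in the paper gives $G\times\mathbb{Z}\approx H\times\mathbb{Z}$ (indeed, whether such product groups are configuration equivalent is posed as an open question right after this corollary), so you cannot feed a presentation of $G\times\mathbb{Z}$ into that lemma. Moreover, even for $G$ and $H$ themselves, Lemma \ref{l21} only controls a prescribed finite set of words; to promote $\T gh$ to a genuine epimorphism one must know that all relators of the target are captured, which is precisely what the golden-configuration-pair machinery of Section 4 is built to supply and what is missing here.
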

\begin{proof}
It is proved in \cite{fc} that a finitely generated group $G$ is an FC-group if and only if
$\frac{G}{Z(G)}$ is finite. So, by Theorem \ref{l26} and Remark \ref{remark}, $\frac{G}{Z(G)}\cong\frac{H}{Z(H)}$ and, therefore, $H$ is an FC-group, too.
 \par
If $G$ is a finitely generated FC-group, then $G$ is isomorphic with a subgroup of $\mathbb Z^n\times F$, for some finite group $F$ (see \cite{fc}). Therefore, by \cite[Lemma 1]{try},  $G\times \mathbb{Z}\cong H\times\mathbb{Z}$, $Z(G)\cong Z(H)$ and $G'\cong H'$. Also, \cite[Theorem 2]{try}, gives $\frac{G}{G'}\cong\frac{H}{H'}$.
\end{proof}
The following question is natural:\par
\begin{question}
What we can say about central series of two configuration equivalent groups? Are they equivalent? 
\end{question}
There are non-isomorphic groups $G$ and $H$ such that $G\times \mathbb{Z}\cong H\times\mathbb{Z}$. See the following groups, for instance: 
\begin{align*}
G&:=\langle x,y|\, x^{11}=e_G, y^{-1}xy=x^2\rangle\\
H&:=\langle x,z|\, x^{11}=e_H, z^{-1}xz=x^8\rangle
\end{align*}
In addition, suppose that $zy=yz$, and let $C:=\langle y^7z\rangle$ and $D:=\langle yz^3\rangle$. Then $G\times C\cong H\times D$ (see \cite[Theorem 13]{walker}). Are these two groups configuration equivalent?
\begin{question}
Can the result of Theorem \ref{l26} be stated more sharply by giving a single configuration or set of configurations which shows that the group is not $FC$?
\end{question}

\section{Strong configuration equivalence and Isomorphism}
 In this section we will introduce the notion of strong configuration equivalence and will prove that this type of configuration equivalence leads to isomorphism. First, consider the definition:
 \begin{definition}
 \label{sce}
 We say that two groups $G$ and $H$ are \textit{strong configuration equivalent}, if there exist ordered generating sets $\mathfrak{g}$ of $G$ and $\mathfrak{h}$ of $H$, such that
 \begin{enumerate}
 \item  For each partition $\mathcal{E}$ of $G$ there exists  a partition $\mathcal{F}$ of $H$ such that $\Con(\mathfrak{g},\mathcal{E})=\Con(\mathfrak{h},\mathcal{F})$,
 \item For each partition  $\mathcal{F}$ of $H$ there is a partition $\mathcal{E}$ of $G$ such that
 $\Con(\mathfrak{h},\mathcal{F})=\Con(\mathfrak{g},\mathcal{E})$.
 \end{enumerate}
 In this case, we will write $(G;\mathfrak{g})\approx_s(H;\mathfrak{h})$.\\
 If one of these two conditions is satisfied, say (i), we will say $G$ is \textit{strongly configuration contained} in $H$ and will denote it by 
 $(G;\mathfrak{g})\precsim_s(H;\mathfrak{h})$.
 \end{definition}
 One can easily show, as done in the proof of Lemma \ref{l21}, that 
 \begin{lemma}\label{ll21}
  Let $G$ and $H$ be two groups such that $(G;\mathfrak{g})\precsim_s(H;\mathfrak{h})$. Let $\mathfrak{F}$ be a finite set of representative pairs on $\g$. Then $\T gh$ is a local homomorphism on $\{e_H\}\cup\{W(J,\rho;\h):(J,\rho)\in\mathfrak{F}\}$.
  \end{lemma}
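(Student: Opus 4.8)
The plan is to reproduce the argument of Lemma~\ref{l21} essentially verbatim, after pinpointing the single place where that proof used the freedom to \emph{choose} the generating set of $H$. In Lemma~\ref{l21} the generating set $\h$ was produced only when the hypothesis $G\precsim H$ was invoked to realize a prescribed configuration set by some $\sigma$-algebra $\B$ of $H$; every other step is purely combinatorial and makes no reference to how $\h$ was obtained. Under the stronger hypothesis $(G;\g)\precsim_s(H;\h)$ the generating set $\h$ is fixed in advance, but condition~(i) of Definition~\ref{sce} hands us, for \emph{any} partition of $G$, a matching partition of $H$ taken with respect to this same $\h$. This is exactly the substitute needed for the lone appeal to $G\precsim H$, so the whole argument goes through with $\h$ prescribed rather than chosen.

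Concretely, first I would set $E(1):=\{e_G\}$ and $E(J,\rho):=\{W(J,\rho;\g)\}$, put $n_0:=\max\{\n J:(J,\rho)\in\mathfrak F\}$, and form the (finite) sets $S_0,S_1,S_2$ of representative pairs of length at most $3n_0,2n_0,n_0$ respectively, just as before. Let $\A$ be the $\sigma$-algebra generated by $E(1)$ together with the singletons $E(J,\rho)$, $(J,\rho)\in S_0$. Now, in place of invoking $G\precsim H$, I would apply condition~(i) of $(G;\g)\precsim_s(H;\h)$ to the partition $\atom A$ of $G$: this yields a partition $\F$ of $H$, hence a finite $\sigma$-algebra $\B:=\sigma(\F)$, with $\con gA=\con hB$, where the configuration pair on the $H$ side is formed from the \emph{fixed} generating set $\h$. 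Denote by $F(1)$ and $F(J,\rho)$ the elements of $\B$ with $E(1)\leftrightsquigarrow F(1)$ and $E(J,\rho)\leftrightsquigarrow F(J,\rho)$, and assume without loss of generality that $e_H\in F(1)$.

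From here the proof is identical to that of Lemma~\ref{l21}. By induction on $\n J$, using Lemma~\ref{important}(b) together with the splitting $J=I_1\oplus I_2$, I would establish $F(J,\rho)=W(J,\rho;\h)F(1)$ for all $(J,\rho)\in S_1$, so that in particular $W(J,\rho;\h)\in F(J,\rho)$. Consequently, if $W(J,\rho;\h)=e_H$ for some $(J,\rho)\in S_1$, then $F(J,\rho)=F(1)$, whence $E(J,\rho)=E(1)$ and $W(J,\rho;\g)=e_G$; this yields both well-definedness and the local-homomorphism property of $\T gh$ on $\{e_H\}\cup\{W(J,\rho;\h):(J,\rho)\in S_2\}$, via the usual concatenation trick: if $W(J,\rho;\h)=W(I,\delta;\h)$ with $(J,\rho),(I,\delta)\in S_2$, then $(J\oplus I^{-1},\rho\oplus\delta^{-1})\in S_1$ and $W(J\oplus I^{-1},\rho\oplus\delta^{-1};\h)=e_H$, forcing $W(J,\rho;\g)=W(I,\delta;\g)$. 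Since $\mathfrak F\subseteq S_2$, the conclusion follows. I do not expect a genuine obstacle here, as the statement is a transcription of Lemma~\ref{l21}; the one point demanding care is precisely the step where the two proofs diverge, namely ensuring that the partition of $H$ supplied by Definition~\ref{sce}(i) is taken with respect to the \emph{prescribed} generating set $\h$, so that the words $W(J,\rho;\h)$ in the conclusion are formed from the intended generators.
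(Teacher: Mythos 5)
Your proposal is correct and follows exactly the route the paper intends: the paper gives no separate proof of Lemma~\ref{ll21}, stating only that it is shown ``as done in the proof of Lemma~\ref{l21},'' and you have correctly identified the single point of divergence, namely that Definition~\ref{sce}(i) supplies the matching partition of $H$ with respect to the prescribed generating set $\h$ rather than producing $\h$ as part of the argument. The remainder of your write-up is a faithful transcription of the induction and concatenation steps from Lemma~\ref{l21}, so nothing further is needed.
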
 
The following lemma will show that this type of configuration equivalence has the ability to recognize a generating set of a group.

\begin{lemma} 
\label{l22}
If $(G;\g)\precsim_s (H;\h)$, then $\T gh$ is an epimorphism from $H$ onto $G$.
\end{lemma}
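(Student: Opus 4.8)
The plan is to reduce the statement to the epimorphism criterion recorded right after Notation~\ref{not}. Reading that criterion with the roles of $G$ and $H$ interchanged, the relation $\T gh$ is an epimorphism from $H$ onto $G$ exactly when, for every representative pair $(J,\rho)$, the equality $W(J,\rho;\h)=e_H$ implies $W(J,\rho;\g)=e_G$. This single implication is all that is needed: applied to the pair $(J\oplus I^{-1},\rho\oplus\delta^{-1})$ it shows that $W(J,\rho;\h)=W(I,\delta;\h)$ forces $W(J,\rho;\g)=W(I,\delta;\g)$, so $\T gh$ is a well-defined function and hence (being of the form $W(\cdot;\h)\mapsto W(\cdot;\g)$) a homomorphism; and surjectivity onto $G$ is automatic because $\g$ generates $G$ and each $W(J,\rho;\g)$ is the $\T gh$-image of $W(J,\rho;\h)$. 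So I will concentrate on proving the implication.

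First I would fix an arbitrary representative pair $(J,\rho)$ with $W(J,\rho;\h)=e_H$ and apply Lemma~\ref{ll21} to the one-element set $\mathfrak{F}=\{(J,\rho)\}$. Since $(G;\g)\precsim_s(H;\h)$, this yields that $\T gh$ is a local homomorphism on $\{e_H\}\cup\{W(J,\rho;\h)\}$. Because $W(J,\rho;\h)=e_H$, this set is simply $\{e_H\}$, and a local homomorphism carries the identity to the identity, so $\T gh(e_H)=e_G$.

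The second step is to feed the pair $(J,\rho)$ back into the relation. By the very definition of $\T gh$, the representative pair $(J,\rho)$ witnesses that $(e_H,W(J,\rho;\g))=(W(J,\rho;\h),W(J,\rho;\g))$ lies in $\T gh$. Lemma~\ref{ll21} asserts that $\T gh$ is a genuine, single-valued local homomorphism on the set above, so its value at $e_H$ is unique; comparing with $\T gh(e_H)=e_G$ gives $W(J,\rho;\g)=e_G$, which is precisely the implication I needed.

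The step I expect to be the crux is this last one, and it is conceptual rather than computational: I must make sure that the single-valuedness delivered by Lemma~\ref{ll21} really constrains the pair $(J,\rho)$ and does not merely record $\T gh(e_H)=e_G$ via the empty word in isolation. Unwinding Lemma~\ref{ll21} along the lines of the proof of Lemma~\ref{l21}, single-valuedness is forced on the set $S_2$ of all representative pairs of length at most $\n J$; this set contains both $(J,\rho)$ and the empty pair, both of which evaluate to $e_H$ under $\h$, so their $\g$-values must agree, yielding $W(J,\rho;\g)=e_G$. Making explicit that $(J,\rho)$ lies in the domain on which Lemma~\ref{ll21} enforces single-valuedness is the one point to be handled with care.
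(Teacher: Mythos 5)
Your proof is correct and follows essentially the same route as the paper's: both apply Lemma~\ref{ll21} to the singleton $\mathfrak{F}=\{(J,\rho)\}$ and use the single-valuedness of the resulting local homomorphism at $e_H$ to obtain the epimorphism criterion $W(J,\rho;\h)=e_H\Rightarrow W(J,\rho;\g)=e_G$ (the paper states this contrapositively, starting from $W(J_0,\rho_0;\g)\neq e_G$). Your closing care about why $(J,\rho)$ itself lies in the set on which single-valuedness is enforced is a legitimate point that the paper leaves implicit, but it does not change the argument.
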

\begin{proof}
Suppose that $W(J_0,\rho_0;\g)\neq e_G$. Applying Lemma \ref{ll21} to $\mathfrak F:=\{(J_0,\rho_0)\}$, we conclude that $\T gh$ is a local homomorphism on {\small $\{e_H\}\cup\{W(J_0,\rho_0;\h)\}$}, so, consequently, {\small $W(J_0,\rho_0;\h)\neq e_H$}. This completes the proof.
\end{proof}

 Now, we state the main theorem of this section.
 \begin{theorem}
 Two groups are strongly configuration equivalent if and only if they are isomorphic.
 \end{theorem}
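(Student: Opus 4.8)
The plan is to prove the two implications separately. The forward implication, that an isomorphism $G\cong H$ forces strong configuration equivalence, I would obtain by transporting structure across the isomorphism; the reverse implication, that strong configuration equivalence forces $G\cong H$, is essentially a corollary of Lemma \ref{l22} together with the isomorphism criterion recorded in the discussion following Notation \ref{not}.

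For the forward direction, suppose $\psi\colon G\to H$ is an isomorphism. I would fix any ordered generating set $\g=(g_1,\dotsc,g_n)$ of $G$ and set $\h=(\psi(g_1),\dotsc,\psi(g_n))$, which generates $H$ because $\psi$ is surjective. Given a partition $\E=\{E_1,\dotsc,E_m\}$ of $G$, the images $F_i:=\psi(E_i)$ form a partition $\F$ of $H$ since $\psi$ is a bijection. To check $\con gE=\con hF$, take a configuration $(c_0,\dotsc,c_n)$ witnessed by $(x_0,\dotsc,x_n)$ with $x_k\in E_{c_k}$ and $x_k=g_kx_0$; applying $\psi$ gives $\psi(x_k)\in F_{c_k}$ and $\psi(x_k)=\psi(g_k)\psi(x_0)=h_k\psi(x_0)$, so $(\psi(x_0),\dotsc,\psi(x_n))$ witnesses the same configuration in $(\h,\F)$. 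The reverse inclusion uses $\psi^{-1}$ identically, and this establishes condition (i) of Definition \ref{sce}; condition (ii) follows by the symmetric argument starting from a partition of $H$ and pulling it back through $\psi^{-1}$. Hence $(G;\g)\approx_s(H;\h)$.

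For the reverse direction, assume $(G;\g)\approx_s(H;\h)$, so that both $(G;\g)\precsim_s(H;\h)$ and $(H;\h)\precsim_s(G;\g)$ hold. Applying Lemma \ref{l22} to the first containment shows that $\T gh$ is an epimorphism of $H$ onto $G$, and applying it to the second (with the roles of $G$ and $H$ exchanged) shows that $\T hg$ is an epimorphism of $G$ onto $H$. Since both relations $\T hg$ and $\T gh$ are now epimorphisms, the criterion following Notation \ref{not} yields that $\T hg$ is an isomorphism of groups, whence $G\cong H$. I do not expect a genuine obstacle at this stage: the real work---forcing the relation $\T gh$ to be single-valued, that is, a \emph{well-defined} homomorphism rather than merely a relation---has already been carried out in Lemma \ref{ll21} and Lemma \ref{l22}, where the freedom to refine to \emph{arbitrarily} fine partitions (the strengthening over ordinary configuration equivalence) is precisely what removes the ambiguity that the introduction flags for the usual notion. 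The only points needing care are routine: verifying that transported partitions have matching configuration sets in both inclusions of the forward direction, and checking that the two epimorphisms $\T hg$ and $\T gh$ are mutually inverse so that the isomorphism criterion genuinely applies.
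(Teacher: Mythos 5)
Your proposal is correct and follows essentially the same route as the paper: the converse direction transports generating sets and partitions through the isomorphism, and the forward direction applies Lemma \ref{l22} in both directions and then invokes the criterion after Notation \ref{not} that $\T hg$ is an isomorphism when both $\T hg$ and $\T gh$ are epimorphisms. The extra verifications you flag (matching configuration sets under $\psi$, the two epimorphisms being mutually inverse) are exactly the routine checks the paper leaves implicit.
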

 \begin{proof}
 First suppose that $(G;\mathfrak{g})\approx_s(H;\mathfrak{h})$. By the above lemma, $\T hg$ and $\T gh$ are epimorphism. So, $\T hg:G\rightarrow H$ is an isomorphism.\par 
 Conversely, suppose that $G\overset{\phi}{\cong}H$. Let $\g=(g_1,\dotsc,g_n)$ be a generating set of $G$, and set $\h:=\phi(\g)=(\phi(g_1),\dotsc,\phi(g_n))$. Then $\h$ is a generating set of $H$. If $\E$ is a partition of $G$. Then $\F:=\phi(\E)=\{\phi(E):\, E\in\E\}$ will be a partition of $H$ which satisfies (i) in Definition \ref{sce}. Also, for a partition $\F$ of $H$, $\E:=\phi^{-1}(\F)$ establishes (ii) in the above-mentioned definition.
 \end{proof}

\section{Configuration and isomorphism}
What really makes it difficult to work with configuration equivalence is that it seems that this type of equivalence can not recognize the identity element of a group. In the previous section, this problem was completely resolved by introducing a new type of configuration equivalence.  We now intend to fix this problem partially by defining a special type of configuration pair which is playing an important role in isomorphisms.\par 
Let $G$ be a group and $\g$ be a generating set of $G$. A representative pair $(J,\rho)$ on $\g$ is called \textit{reduced}, if $\rho(k)=\rho(k+1)$, whenever $J(k)=J(k+1)$, for $k<\n J$. It is evident that if $(J,\rho)=(I_1\oplus I_2,\delta_1\oplus\delta_2)$ is reduced, then both representative pairs $(I_k,\delta_k)$'s are reduced, too. 
\begin{definition}
\label{golden}
Let $G$ be a group and $(\g,\E)$ be a configuration pair of $G$ such that $\{e_G\}\in\E$. We call $(\g,\E)$ \textit{golden}, if it can be concluded from the equation $\con gE=\con {g'}{E'}$, for a configuration pair $(\g',\E')$ of $G$, that 
\begin{align}\label{implication} W(J,\rho;\g)\neq e_G\quad \Rightarrow \quad W(J,\rho;\g')E'\cap E'=\emptyset
\end{align}
where $(J,\rho)$ is a reduced representative pair and $E'$ denotes the element of $\E'$ corresponding to $\{e_G\}$.
\end{definition}
The following lemma is exactly what we expect from golden configuration pairs:
\begin{lemma}\label{hofian} Let $(\g,\E)$ be a golden configuration pair of a Hopfian group $G$. Then for each configuration pair $(\g',\E')$ which satisfies $\con gE=\con {g'}{E'}$, $\T g{g'}$ is an automorphism of $G$. Also, if $\{e_G\}\leftrightsquigarrow E'\in\E'$ and $e_G\in E'$, then $(\g',\E')$ is golden too.
\end{lemma}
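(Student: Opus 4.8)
The plan is to establish the two assertions in turn, reading off the epimorphism and isomorphism criteria recorded after Notation~\ref{not} and invoking the Hopfian hypothesis only at the very end of the first part.

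First I would use Remark~\ref{remark 1} to arrange that $e_G\in E'$, where $E'$ is the cell of $\E'$ with $\{e_G\}\leftrightsquigarrow E'$; this costs nothing, since it only replaces $\E'$ by a translate realizing the same configuration set, whereas $\T g{g'}$ depends on $\g$ and $\g'$ alone. The goal is the epimorphism criterion for $\T g{g'}$, namely that $W(J,\rho;\g')=e_G$ forces $W(J,\rho;\g)=e_G$ for every representative pair $(J,\rho)$. Because the cancellation $g_ig_i^{-1}=e$ holds in every group, I may freely reduce $(J,\rho)$ to a reduced pair without altering either $W(J,\rho;\g)$ or $W(J,\rho;\g')$, so it suffices to treat reduced pairs. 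For such a pair the golden implication~(\ref{implication}) gives that $W(J,\rho;\g)\neq e_G$ implies $W(J,\rho;\g')E'\cap E'=\emptyset$; since $e_G\in E'$, the element $W(J,\rho;\g')=W(J,\rho;\g')e_G$ then lies outside $E'$ and in particular differs from $e_G$. The contrapositive is exactly the claimed implication, so $\T g{g'}$ is a well-defined surjective endomorphism of $G$; as $G$ is Hopfian, it is automatically injective, hence an automorphism.

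Writing $\phi:=\T g{g'}$, we have $\phi(W(J,\rho;\g'))=W(J,\rho;\g)$, and injectivity of $\phi$ yields the equivalence $W(J,\rho;\g')=e_G\iff W(J,\rho;\g)=e_G$ for all $(J,\rho)$; this is the bridge for the second part. There I would assume in addition that $\{e_G\}\leftrightsquigarrow E'$ with $e_G\in E'$, and take any configuration pair $(\g'',\E'')$ with $\con{g'}{E'}=\con{g''}{E''}$, where $E''\in\E''$ corresponds to $E'$. The crucial move is transitivity: combining with $\con gE=\con{g'}{E'}$ gives $\con gE=\con{g''}{E''}$, and following the chain $\{e_G\}\leftrightsquigarrow E'\leftrightsquigarrow E''$ shows that $E''$ is precisely the cell of $\E''$ corresponding to $\{e_G\}$. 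Hence the goldenness of $(\g,\E)$ applies verbatim to $(\g'',\E'')$ and gives, for every reduced $(J,\rho)$, that $W(J,\rho;\g)\neq e_G$ implies $W(J,\rho;\g'')E''\cap E''=\emptyset$. Substituting the equivalence $W(J,\rho;\g')\neq e_G\iff W(J,\rho;\g)\neq e_G$ coming from $\phi$ replaces $\g$ by $\g'$ in the antecedent, which is exactly the golden implication~(\ref{implication}) for $(\g',\E')$ with $E'$ as its identity cell; thus $(\g',\E')$ is golden.

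The two points needing care are the normalizations: passing from arbitrary to reduced pairs by free reduction (legitimate precisely because cancellation is group-independent and so preserves word values simultaneously for $\g$, $\g'$, and $\g''$), and tracking the identity cell along the chain of correspondences so that goldenness of $(\g,\E)$ transfers intact to $(\g'',\E'')$. The genuinely indispensable hypothesis, and the step I expect to be the real crux, is Hopficity: on its own the golden pair only produces a surjective endomorphism, and it is Hopficity that upgrades it to an automorphism — which is in turn what makes the equivalence $W(J,\rho;\g')=e_G\iff W(J,\rho;\g)=e_G$ available for transferring goldenness in the second part.
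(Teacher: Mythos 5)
Your argument follows the paper's proof almost step for step: the first half derives the epimorphism criterion for $\T g{g'}$ from the golden implication (\ref{implication}) and then invokes Hopficity to upgrade to an automorphism, and the second half pushes the implication through $\phi=\T g{g'}$ and the transitivity of $\leftrightsquigarrow$ to transfer goldenness from $(\g,\E)$ to $(\g',\E')$ via the intermediate pair $(\g'',\E'')$. Your explicit observation that an arbitrary representative pair may be freely reduced without changing its word value in either group, so that the epimorphism criterion need only be checked on reduced pairs, is a point the paper leaves implicit, and making it explicit is a small improvement in rigour.

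There is, however, one step the paper carries out that you omit, and it is needed before the final conclusion can even be stated: Definition \ref{golden} requires that the partition of a golden pair contain the singleton $\{e_G\}$ as a cell, so before declaring $(\g',\E')$ golden you must show that the cell $E'$ with $\{e_G\}\leftrightsquigarrow E'$ and $e_G\in E'$ is in fact equal to $\{e_G\}$; you only ever assume $e_G\in E'$. The verification is short with the tools you already have: if $x\in E'$ with $x\neq e_G$, write $x=W(J,\rho;\g')$ for a reduced representative pair $(J,\rho)$; then $W(J,\rho;\g)=\phi(x)\neq e_G$ by injectivity of $\phi$, so the golden implication for $(\g,\E)$ gives $W(J,\rho;\g')E'\cap E'=\emptyset$, contradicting $x=x\cdot e_G\in W(J,\rho;\g')E'\cap E'$. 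Hence $E'=\{e_G\}$, and with that inserted your proof is complete and coincides with the paper's.
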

\begin{proof}
By the implication (\ref{implication}) for a reduced representative pair {\small $(J,\rho)$}, {\small $W(J,\rho;\g')\neq e_G$}, whenever {\small $W(J,\rho;\g)\neq e_G$}, therefore, $\T g{g'}$ is an epimorphism from $G$ onto $G$. But $G$ is Hopfian, hence $\phi:=\T g{g'}$ is indeed an automorphism. 
\par Now, assume that $\{e_G\}\leftrightsquigarrow E'\in\E'$ and $E'$ contains $e_G$. If $W(J,\rho;\g')\in E'$ for a reduced representative pair $(J,\rho)$, then $\phi(W(J,\rho;\g'))=W(J,\rho;\g)=e_G$, whence $W(J,\rho;\g')=e_G$, so $E'=\{e_G\}$. If $\con {g'}{E'}=\con {g''}{E''}$, for a configuration pair $(\g'',\E'')$ of $G$, and $\{e_G\}\leftrightsquigarrow E''$, then for each reduced pair $(J,\rho)$, 
\begin{align*}
W(J,\rho;\g')\neq e_G&\quad \Rightarrow \quad W(J,\rho;\g)=\phi(W(J,\rho;\g'))\neq e_G\\
&\quad \Rightarrow \quad W(J,\rho;\g'')E''\cap E''=\emptyset
\end{align*}
\end{proof}
\begin{example}\label{groups with golden configuration pairs}
Below, we've listed some groups which have a golden configuration pair:
  \begin{enumerate}
    \item All non-Abelian free groups have a golden configuration pair. Consider a generating set $\f=(f_1,\dotsc,f_n)$ of $\textbf{F}_n$. Set 
$$\E=\{E_0, E_k, E_{-k};\,k=1,\dotsc,n\}$$
where $E_0=\{e_{\textbf{F}_n}\}$, and 
\begin{align*}
E_k&=\{\text{reduced words starting with $f_k$}\}\\
E_{-k}&=\{\text{reduced words starting with $f_k^{-1}$}\}
\end{align*}
for $k=1,\dotsc,n$. One can easily verify that for $k\in\{1,\dotsc,n\}$, 
\begin{align}\label{0}
f_k(\textbf{F}_n\setminus E_{-k})=E_k\quad \text{and}\quad
f_kE_{-k}=\textbf{F}_n\setminus E_{k}
\end{align}
 If $H$ is a group with a configuration pair $(\h,\F)$ such that $\con fE=\con hF$. Then $\h$ and $\F$ can be displayed as 
\begin{align*}
\h&=(h_1,\dotsc,h_n)\\
\F&=\{F_0, F_k,F_{-k};\,k=1,\dotsc,n\},\quad e_H\in F_0
\end{align*}
where 
$$F_0\leftrightsquigarrow E_0, F_k\leftrightsquigarrow E_k, F_{-k}\leftrightsquigarrow E_{-k}, k=1,\dotsc,n.$$
By Lemma \ref{important} and relations (\ref{0}), for $k\in\{1,\dotsc,n\}$, the following relations will hold:
\begin{align*}
h_k(H\setminus F_{-k})=F_k\quad \text{and}\quad
h_kF_{-k}=H\setminus F_{k}
\end{align*}
Considering these relations, it may be concluded that for each reduced representative pair $(J,\rho)$ on $\h$, 
$$W(J,\rho;\h)F_0\subseteq F_{\rho(1)J(1)}.$$
So, $(\mathfrak f,\E)$ is a golden configuration pair of $\textbf F_n$ (see \cite[Proposition 6.1]{arw} for details). 

\item Let $\mathbb Z$ be the group of integers, $n$ be a positive integer and $F$ be a finite group. Then all groups on the form $\mathbb Z^n\times F$ have a golden configuration pair. Indeed, suppose that $F=\{x_0=e_F,x_1,\dotsc,x_m\}$ is an arbitrary finite group and $n\in\mathbb N$. Let $\g=(g_1,\dotsc,g_{n+m})$, where 
\begin{align*}
g_i&=(\e_i,e_F),\quad i=1,\dotsc,n\\
g_{n+j}&=(\mathbf o,x_j),\quad j=1,\dotsc,m.
\end{align*}
where $\mathbf o$ is the neutral element of $\mathbb Z^n$, and $\e_i$ is the element of $\mathbb Z^n$, whose only nonzero component, $i$th one, is 1. \par
Let $\Sigma$ be the set of all functions from $\{1,\dotsc,n\}$ into $\{-1,0,1\}$. Set 
$$E(\tau,j)=\tau(1)\mathbb N\times\dotsb\times\tau(n)\mathbb N\times\{x_j\}$$
 for $\tau\in\Sigma$ and $j=0,1,\dotsc,m$. Consider the $\sigma$-algebra, $\A$, generated by sets $\{g_i\}$, $\{g_ig_j\}$, $1\leq i,j\leq n$ and $E(\tau,j)$, $\tau\in\Sigma$ and $j=0,1,\dotsc,m$. Then $(\g,\atom A)$ is a golden configuration pair. By the proof of \cite[Theorem 3.5]{ary}, the reader can certify the correctness of this claim. In particular, all finite and all Abelian groups have a golden configuration pair.

\item The infinite dihedral group, $D_\infty=\langle x,y: x^2=y^2=1\rangle$, has a golden configuration pair. Let $\g=(x,y)$, and $\E=\{E_k:k=1,\dotsc,5\}$, where 
$$E_1=\{e_{D_\infty}\},\,E_2=\{x\},\,E_3=\{y\}$$ 
and 
\begin{align*}
E_4&=\{g_1g_2\dots g_n: n\in\mathbb N, n>1, g_1=x, g_i\in\{x,y\}, g_i\neq g_{i-1}, i=2,\dotsc,n\}\\
E_5&=\{g_1g_2\dots g_n: n\in\mathbb N, n>1, g_1=y, g_i\in\{x,y\}, g_i\neq g_{i-1}, i=2,\dotsc,n\}
\end{align*}
By \cite[Example 3.7]{ary}, it can be seen that $(\g,\E)$ is a golden configuration pair.
  \end{enumerate}
\end{example}

Let $G$ and $H$ be two groups. Consider partitions $\E=\{E_1,\dotsc,E_r\}$ of $G$, $\F=\{F_1,\dotsc,F_r\}$ of $H$, and their refinements
$$
\E'=\{E'_1,\dotsc,E'_s\},\text{and}\,\F'=\{F'_1,\dotsc,F'_s\}.
$$
We say that these two refinements $\E'$ and $\F'$ are \textbf{similar} and write $(\E',\E)\sim(\F',\F)$, if 
$$ \{l : E_k\cap E'_l\neq\emptyset\}=\{l : F_k\cap F'_l\neq\emptyset\}\quad(k=1,\dotsc,r).$$
In other words, if $E_k=\bigcup_{j=1}^tE'_{i_j}$, then we have $F_k=\bigcup_{j=1}^tF'_{i_j}$. \par
Note that it is implicit in the definition of similarity that similar
partitions have equal numbers of sets.
\begin{lemma}\label{3th to 4th part}
Let $\E'=\{E'_1,\dotsc,E'_s\}$ be a refinement of a partition $\E=\{E_1,\dotsc,E_r\}$ of $G$. For a partition $\F'=\{F'_1,\dotsc,F'_s\}$ of $H$, there exists a partition $\F=\{F_1,\dotsc,F_r\}$ of $H$ such that $(\E',\E)\sim(\F',\F)$.
\end{lemma}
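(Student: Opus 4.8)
The plan is to read off the desired partition $\F$ of $H$ directly from the way $\E'$ refines $\E$, transporting the block structure across the common index set $\{1,\dots,s\}$. First I would record the elementary consequence of $\E'$ being a refinement of $\E$: each cell $E'_l$ is contained in exactly one cell $E_k$, so the index sets
$$
I_k:=\{l:\,E_k\cap E'_l\neq\emptyset\}\qquad(k=1,\dots,r)
$$
are pairwise disjoint, cover $\{1,\dots,s\}$, and satisfy $E_k=\bigcup_{l\in I_k}E'_l$. Since the cells $E'_l$ are pairwise disjoint and nonempty, the condition $E_k\cap E'_l\neq\emptyset$ is equivalent to $l\in I_k$, so $\{l:\,E_k\cap E'_l\neq\emptyset\}=I_k$ holds on the nose.

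Next I would define the candidate by mimicking this decomposition on the $H$-side, putting
$$
F_k:=\bigcup_{l\in I_k}F'_l\qquad(k=1,\dots,r).
$$
Because $\{I_1,\dots,I_r\}$ partitions $\{1,\dots,s\}$ and $\F'=\{F'_1,\dots,F'_s\}$ is a partition of $H$, the sets $F_k$ are pairwise disjoint (disjoint index blocks applied to pairwise disjoint cells) with $\bigcup_{k=1}^r F_k=\bigcup_{l=1}^s F'_l=H$; thus $\F=\{F_1,\dots,F_r\}$ is indeed a partition of $H$ with $r$ cells.

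It then remains to verify the similarity condition. Using the disjointness and nonemptiness of the cells $F'_l$ once more, for each $k$ we have $F_k\cap F'_l\neq\emptyset$ precisely when $l\in I_k$, whence
$$
\{l:\,F_k\cap F'_l\neq\emptyset\}=I_k=\{l:\,E_k\cap E'_l\neq\emptyset\}\qquad(k=1,\dots,r),
$$
which is exactly the defining requirement for $(\E',\E)\sim(\F',\F)$.

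The argument is pure bookkeeping, so I do not expect a genuine obstacle; the single delicate point is the standing convention that partition cells are nonempty. This is what makes the two equalities $\{l:\,E_k\cap E'_l\neq\emptyset\}=I_k$ and $\{l:\,F_k\cap F'_l\neq\emptyset\}=I_k$ hold simultaneously, and hence lets the same index blocks $I_k$ serve on both sides; if empty cells were permitted one would first have to match the empty cells of $\E'$ and $\F'$ index-by-index before forming the blocks.
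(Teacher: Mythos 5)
Your construction $F_k:=\bigcup_{l\in I_k}F'_l$ with $I_k=\{l:\,E_k\cap E'_l\neq\emptyset\}$ is exactly the definition the paper gives in its one-line proof, and your verification that it yields a partition satisfying the similarity condition is correct. Same approach, just carried out in more detail.
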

\begin{proof}
It is enough to set 
$$F_k=\bigcup\{F'_l:\, E_k\cap E_l'\neq\emptyset\}$$
for $k=1,\dotsc,r$.
\end{proof}

An important feature of similar refinements is presented below:
\begin{lemma}
\label{similarity}
let $G$ and $H$ be two groups. Assume that $(\g,\E)$ and $(\h,\F)$ are two configuration pairs for $G$ and $H$, respectively, and let $\E'$ and $\F'$ be their similar refinements such that $\con g{E'}=\con h{F'}$. Then $\con gE=\con hF$.
\end{lemma}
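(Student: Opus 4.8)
The plan is to realize each coarse configuration set as the coordinatewise projection of its refinement, and then to transport the equality $\con g{E'}=\con h{F'}$ through this projection. Since $\E'$ refines $\E$, every block $E'_l$ lies inside a unique block $E_k$; write $\pi(l)=k$ for this index, obtaining a map $\pi\colon\{1,\dots,s\}\to\{1,\dots,r\}$ that records how the refinement is grouped into $\E$, and let $\pi_*$ denote its coordinatewise action on $(n+1)$-tuples. Because $\E'$ refines $\E$ we have $E_k\cap E'_l\neq\emptyset$ exactly when $E'_l\subseteq E_k$, so the similarity hypothesis $(\E',\E)\sim(\F',\F)$ says precisely that $E'_l\subseteq E_k$ holds if and only if $F'_l\subseteq F_k$. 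Hence the grouping map computed on the $H$-side, $\pi'(l)=k$ iff $F'_l\subseteq F_k$, coincides with $\pi$, and a single projection governs both sides.

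The key step is the identity $\con gE=\pi_*\bigl(\con g{E'}\bigr)$, which I would verify directly from Definition \ref{1}. If a tuple $(x_0,\dots,x_n)$ realizes a configuration $C=(c_0,\dots,c_n)$ for $(\g,\E)$, then each $x_i$ lies in a unique refinement block $E'_{c_i'}$, and from $x_i\in E_{c_i}$ we get $E'_{c_i'}\subseteq E_{c_i}$, i.e.\ $\pi(c_i')=c_i$; the same tuple realizes $C'=(c_0',\dots,c_n')\in\con g{E'}$, since the defining relations $x_i=g_ix_0$ involve only the generating set and are untouched by refining the partition. Conversely, if $C'\in\con g{E'}$ is realized by $(x_0,\dots,x_n)$, then $x_i\in E'_{c_i'}\subseteq E_{\pi(c_i')}$, so the very same tuple realizes $\pi_*(C')$ for $(\g,\E)$. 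The identical argument on the $H$-side yields $\con hF=\pi_*\bigl(\con h{F'}\bigr)$, the point being that the $H$-grouping map equals $\pi$ by the previous paragraph.

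With the two projection formulas in hand the conclusion is immediate: applying $\pi_*$ to the hypothesis gives
\[
\con gE=\pi_*\bigl(\con g{E'}\bigr)=\pi_*\bigl(\con h{F'}\bigr)=\con hF.
\]
The only delicate point, and the closest thing to an obstacle, is the well-definedness of $\pi$ and its coincidence on the two sides; this rests entirely on the defining property of a refinement (each block of $\E'$ meets exactly one block of $\E$) together with the combinatorial content of similarity, which forces the $E$- and $F$-groupings to share the same index pattern. Everything else is a routine unwinding of the definition of configuration, the crucial observation being that refining a partition leaves the relations $x_k=g_kx_0$ unchanged.
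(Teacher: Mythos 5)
Your proof is correct and follows essentially the same route as the paper's: the paper reduces WLOG to the case where the refinement splits a single block into two and then defines the projection $\widehat{C}$ and the lift $\tilde{C}$ with $\widehat{\tilde C}=C$, which is exactly your identity $\con gE=\pi_*\bigl(\con g{E'}\bigr)$ specialized to that case. Your version merely handles the general grouping map $\pi$ directly and makes explicit that similarity forces the two groupings to coincide, which the paper leaves implicit.
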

\begin{proof}
Without loss of generality, let 
\begin{gather*}
\E=\{E_1,\dotsc,E_m\}\quad\text{and}\quad \E'=\{K_1,\dotsc,K_m,K_{m+1}\}
\end{gather*}
where $K_i=E_i$, $i=1,\dotsc,m-1$, and $K_m\cup K_{m+1}=E_m$.\par
Now only note that if $C=(c_0,c_1,\dotsc,c_n)$ belongs to $\con g{E'}$; by changing components which are $m$ or $m+1$ into $m$, we will obtain a configuration $\widehat{C}$ in $\con gE$.\par
Conversely, assume that $\con gE$ contains a configuration $C=(c_0,c_1,\dotsc,c_n)$.  Let $(x_0,x_1,\dotsc,x_n)$ have the configuration $C$. Now, replace components $c_i=m$ with $m$ or $m+1$ depending on $x_i\in K_m$ or $x_i\in K_{m+1}$, respectively, to obtain a configuration $\tilde C$ in $\con g{E'}$. 
\par The proof will be complete by noting that, for $C\in\con gE$, $\widehat{\tilde C}=C$.
\end{proof}
 
The following lemma is of particular importance:
\begin{lemma}
\label{refinement}
Let $G$ be a group with a golden configuration pair $(\g,\E)$. Then for each refinement $\E'$ of $\E$, the configuration pair $(\g,\E')$ is golden too. 
\end{lemma}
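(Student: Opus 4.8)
The plan is to show that golden-ness is inherited by refinements by reducing any configuration equivalence involving the refined pair $(\g,\E')$ back to a configuration equivalence involving the original golden pair $(\g,\E)$, where we already know the defining implication holds. The key technical tool is Lemma \ref{similarity}, which lets me pass between similar refinements while preserving configuration-set equality.

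Let me think carefully about the setup. We have $(\g,\E)$ golden with $\{e_G\}\in\E$, and $\E'$ a refinement of $\E$. To verify $(\g,\E')$ is golden, I must check its defining property: $\{e_G\}\in\E'$ (which holds since $\{e_G\}$ is an atom and refinements only split sets, so the singleton $\{e_G\}$ remains an element of any refinement — actually I need $\{e_G\}\in\E'$, which is automatic because $\{e_G\}$ is already a minimal nonempty set), and for any configuration pair $(\g',\E'')$ of $G$ with $\con g{E'}=\con{g'}{E''}$, the implication $W(J,\rho;\g)\neq e_G \Rightarrow W(J,\rho;\g')E''\cap E''=\emptyset$ holds for reduced $(J,\rho)$, where $E''\in\E''$ corresponds to $\{e_G\}$.

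So suppose I am given $(\g',\E'')$ with $\con g{E'}=\con{g'}{E''}$. My strategy is to use Lemma \ref{3th to 4th part}: since $\E'$ refines $\E$, I can produce from $\E''$ a partition $\E'''$ of $G$ (coarsening $\E''$) so that $(\E',\E)\sim(\E'',\E''')$ — that is, $\E'''$ stands to $\E''$ exactly as $\E$ stands to $\E'$. Then Lemma \ref{similarity} applied to the similar refinements $\E'$ (of $\E$) and $\E''$ (of $\E'''$) yields $\con gE = \con{g'}{E'''}$. Now $(\g,\E)$ is golden, so for reduced $(J,\rho)$ with $W(J,\rho;\g)\neq e_G$ I get $W(J,\rho;\g')\tilde E\cap \tilde E=\emptyset$, where $\tilde E\in\E'''$ corresponds to $\{e_G\}\in\E$. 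The final step is to descend from $\tilde E$ back to the smaller set $E''\in\E''$ corresponding to $\{e_G\}\in\E'$: since $E''\subseteq\tilde E$ (the cell of $\E''$ lies inside the cell of the coarsening $\E'''$ that it belongs to), the disjointness $W(J,\rho;\g')\tilde E\cap\tilde E=\emptyset$ immediately forces $W(J,\rho;\g')E''\cap E''=\emptyset$, which is exactly the golden implication for $(\g,\E')$.

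The main obstacle I anticipate is bookkeeping the correspondences correctly: I must verify that the atom of $\E''$ corresponding to $\{e_G\}\in\E'$ is contained in the atom of the coarsening $\E'''$ that corresponds to $\{e_G\}\in\E$, and that the correspondence $\leftrightsquigarrow$ is compatible with the similarity relation $\sim$ in the way Lemma \ref{similarity} requires. Concretely I need $e_G$ itself to witness this containment: since $\{e_G\}\in\E'$ refines the cell of $\E$ containing $e_G$ (namely $\{e_G\}$ again), the corresponding cells $E''$ and $\tilde E$ both contain the image of $e_G$ under the correspondence, and $E''\subseteq\tilde E$ follows from how Lemma \ref{3th to 4th part} builds the coarsening. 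Once the containment $E''\subseteq\tilde E$ is pinned down, the rest is a direct chase through the two cited lemmas.
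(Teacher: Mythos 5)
Your proposal is correct and follows essentially the same route as the paper: coarsen the partition $\E''$ via Lemma \ref{3th to 4th part} to obtain a partition similar to $\E$, apply Lemma \ref{similarity} to transfer the configuration equality up to $(\g,\E)$, and invoke the golden property there. The only cosmetic difference is that the paper notes the cell corresponding to $\{e_G\}$ is unchanged by the coarsening (since $\{e_G\}$ is a singleton atom of both $\E$ and $\E'$, one gets equality $E''=\tilde E$ rather than just the containment you use), but your containment argument suffices for the disjointness conclusion.
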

\begin{proof}
Assume that $(\mathfrak{x},\mathcal K')$ is a configuration pair of $G$ such that 
$\con \g{\E'}=\Con(\mathfrak{x},\mathcal K')$. Assume that $\{e_G\}\leftrightsquigarrow K'\in \mathcal K'$. Make $\mathcal K'$ coarser to gain a partition $\mathcal K$ such that $(\E',\E)\sim(\mathcal K',\mathcal K)$ (see lemma \ref{3th to 4th part}). So, If $\{e_G\}\leftrightsquigarrow K\in \mathcal K$, then $K=K'$. By Lemma \ref{similarity}, we have $\con \g\E=\Con(\mathfrak{x},\mathcal K)$, and this completes the proof.
\end{proof}
\begin{lemma}
\label{recognize}
Let $G$ and $H$ be two groups with $G\approx H$. Assume that $G$ is Hopfian with a golden configuration pair $(\g,\E)$ and that $(\h,\F)$ is a configuration pair for $H$ such that $\con gE=\con hF$. Then 
\begin{itemize}
\item[(a)] If $\{e_G\}\leftrightsquigarrow F\in\F$, then $F$ is a singleton set. 
\item[(b)] Let $\F'$ be a refinement of $\F$. Then there exists a partition $\E'$ of $G$ such that $(\g,\E')$ is a golden configuration pair and $\con h{F'}=\con g{E'}$. 
\end{itemize}
 \end{lemma}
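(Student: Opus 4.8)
The plan is to prove both parts by exploiting the machinery already developed, especially Lemma \ref{hofian} and Lemma \ref{refinement}, together with the fact that a golden pair forces the corresponding-to-$\{e_G\}$ cell on the $H$-side to behave like the identity coset. Throughout, let $F\in\F$ denote the cell with $\{e_G\}\leftrightsquigarrow F$, and recall from Remark \ref{remark 1} that we may assume $e_H\in F$.

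\emph{Part (a).} First I would apply Lemma \ref{hofian} to the golden pair $(\g,\E)$ and the configuration pair $(\h,\F)$ satisfying $\con gE=\con hF$; this gives that $\phi:=\T g\h$ is an automorphism of $G$ (here using that $G$ is Hopfian). The key point is that $\phi$ identifies words in $\h$ with words in $\g$: for a reduced representative pair $(J,\rho)$ we have $\phi(W(J,\rho;\h))=W(J,\rho;\g)$. Now suppose $W(J,\rho;\h)\in F$ for some reduced $(J,\rho)$. I would argue, exactly as in the second paragraph of the proof of Lemma \ref{hofian}, that the golden property forces $W(J,\rho;\g)=e_G$: indeed if $W(J,\rho;\g)\neq e_G$ then by the implication (\ref{implication}) we would have $W(J,\rho;\h)F\cap F=\emptyset$, contradicting $e_H\in F$ and $W(J,\rho;\h)\in F$. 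Since $\phi$ is injective and $\phi(W(J,\rho;\h))=W(J,\rho;\g)=e_G$, we get $W(J,\rho;\h)=e_H$. As every element of $H$ is of the form $W(J,\rho;\h)$ for some reduced $(J,\rho)$ (reduction does not change the group element), this shows $F=\{e_H\}$, a singleton.

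\emph{Part (b).} Given a refinement $\F'$ of $\F$, I would use Lemma \ref{3th to 4th part} in the reverse direction: since $\con gE=\con hF$ sets up the correspondence $\leftrightsquigarrow$ between $\E$ and $\F$, and $\F'$ refines $\F$, I want to produce a refinement $\E'$ of $\E$ on the $G$-side whose refinement pattern matches that of $\F'$, i.e.\ with $(\E',\E)\sim(\F',\F)$. Concretely, because $\con gE=\con hF$ we may realize $(\h,\F')$ as a refined configuration pair; invoking the equivalence $G\approx H$ applied to the pair $(\h,\F')$ produces a configuration pair $(\g,\E')$ of $G$ with $\con g{E'}=\con h{F'}$, and by coarsening (Lemma \ref{3th to 4th part}) together with Lemma \ref{similarity} one checks that $\E'$ refines $\E$ in the pattern similar to how $\F'$ refines $\F$. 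Finally, since $(\g,\E)$ is golden and $\E'$ is a refinement of $\E$, Lemma \ref{refinement} immediately gives that $(\g,\E')$ is golden, completing the proof.

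The main obstacle I anticipate is the bookkeeping in part (b): one must be careful that the refinement $\E'$ extracted on the $G$-side genuinely refines $\E$ and is \emph{similar} to the refinement $\F'$ of $\F$, rather than being some unrelated partition with the right configuration set. The clean way around this is to extract $\E'$ via the hypothesis $G\approx H$ applied to $(\h,\F')$, then apply Lemma \ref{3th to 4th part} to build the matching coarse partition and invoke Lemma \ref{similarity} to transfer the equality $\con g{E'}=\con h{F'}$ down to $\con gE=\con hF$, which pins down the similarity pattern; once similarity is in hand, Lemma \ref{refinement} does the rest with no further work. Part (a), by contrast, is essentially a direct reprise of the argument in Lemma \ref{hofian} and should present no real difficulty.
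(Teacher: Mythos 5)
Both parts of your proposal contain genuine gaps, and in each case the missing ingredient is the step where the paper pulls the problem back to a configuration pair of $G$ itself. In part (a), you invoke Lemma \ref{hofian} and the implication (\ref{implication}) for the pair $(\h,\F)$. This is not licensed: Definition \ref{golden} asserts the implication only for configuration pairs $(\g',\E')$ \emph{of $G$}, and Lemma \ref{hofian} likewise concerns a second configuration pair of $G$; the pair $(\h,\F)$ lives in the other group $H$, so you cannot deduce $W(J,\rho;\h)F\cap F=\nil$ from $W(J,\rho;\g)\neq e_G$. Moreover $\T gh$ is a relation from $H$ to $G$, not an endomorphism of $G$: at best it would be an epimorphism $H\to G$, and the Hopfian hypothesis on $G$ gives no injectivity for such a map, so the step ``$\phi$ is injective, hence $W(J,\rho;\h)=e_H$'' is unsupported (indeed, proving $H\cong G$ is the goal of Theorem \ref{last} and requires finite presentability). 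The paper instead stays inside $G$: it splits $F=F_1\cup F_2$ into nonempty pieces, uses $G\approx H$ to pull the refined partition back to a configuration pair $(\g',\mathcal L')$ of $G$, coarsens to $\mathcal L$ so that $\con {g'}L=\con gE$ by Lemma \ref{similarity}, and then Lemma \ref{hofian} (with Remark \ref{remark 1}) forces the cell of $\mathcal L$ corresponding to $\{e_G\}$ to be $\{e_G\}$, while by construction that cell is $L_1\cup L_2$ with both $L_i$ nonempty --- a contradiction.

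In part (b) you gloss over two points. First, applying $G\approx H$ to $(\h,\F')$ yields a configuration pair $(\mathfrak x,\mathcal P')$ of $G$ whose generating set $\mathfrak x$ need not be $\g$; you write the output as $(\g,\E')$ as though the generating set were automatically the original one. Second, after coarsening via Lemma \ref{3th to 4th part}, the partition $\mathcal P'$ refines a partition $\mathcal P$ satisfying $(\mathcal P',\mathcal P)\sim(\F',\F)$ and $\con xP=\con gE$; but $\mathcal P$ is not $\E$, so Lemma \ref{refinement} cannot be applied with $\E$ as the coarse partition, contrary to your final step. The paper resolves both problems with one move: from $\con xP=\con gE$, Lemma \ref{hofian} gives that $\psi:=\T gx$ is an automorphism of $G$ and that $(\mathfrak x,\mathcal P)$ is golden; Lemma \ref{refinement} then makes $(\mathfrak x,\mathcal P')$ golden, and setting $\E':=\psi^{-1}(\mathcal P')$ transports everything back to the generating set $\g$, giving $\con g{E'}=\con x{P'}=\con h{F'}$ with $(\g,\E')$ golden. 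Without this automorphism step your argument does not reach the stated conclusion.
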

 \begin{proof}
(a)Assume to contrary that, $F$ is not a singleton set, so we can write $F=F_1\cup F_2$, for nonempty sets $F_1$ and $F_2$. Consider the following refinement of $\F$,
$$\mathcal K:=\{F_1,F_2\}\cup(\F\setminus\{F\}).$$
There is a configuration pair $(\g',\mathcal L')$ such that $\con {g'}{L'}=\con hK$. Suppose that $F_i\leftrightsquigarrow L_i\in\mathcal L'$, $i=1,2$. Let partition $\mathcal L$ be such that $(\mathcal L',\mathcal{L})\sim (\mathcal{K}',\mathcal{K})$. Lemma \ref{similarity} implies that 
$$\con {g'}L=\con hF=\con gE.$$
But $(\g,\E)$ is golden, hence by Lemma \ref{hofian} and Remark \ref{remark 1}, we can assume that $(\g',\mathcal L)$ is golden, so we should have $\{e_G\}=L_1\cup L_2$ and this is impossible.\par 
(b) Now, let $\F'$ be a refinement of $\F$. By $G\approx H$, there exists a configuration pair 
$(\mathfrak x,\mathcal P')$ of $G$ such that $\con h{F'}=\Con(\mathfrak x,\mathcal P')$. Assume $\mathcal P$ is finer than $\mathcal P'$ with $(\mathcal P',\mathcal P)\sim(\F',\F)$. Hence, by lemma \ref{similarity} 
\begin{gather} 
\label{rrrrrr}
\con x{P}=\con hF=\con gE
\end{gather}
so, $\psi:=\T gx$ is an automorphism of $G$. Now, put 
$$\E':=\psi^{-1}(\mathcal P')=\{\psi^{-1}(P'):\,P'\in\mathcal P'\}.$$
We have 
$$\con g{E'}=\Con(\psi(\g),\psi(\E'))=\con x{P'}=\con h{F'}$$
and by Lemma \ref{hofian}, we can assume that $(\g,\E')$ is golden.
 \end{proof}
Now, we will state and prove the main theorem of this section.
 \begin{theorem}
 \label{last}
  Let $G$ be a Hopfian group with a golden configuration pair and $H$ be a group such that $G\approx H$. Then $G$ is finitely presented if and only if $H$ is finitely presented, and in the case that $G$ or $H$ is a finitely presented group, we have $G\cong H$.
  \end{theorem}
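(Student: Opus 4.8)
The plan is to realize the isomorphism as a pair of mutually inverse epimorphisms $\T hg\colon G\to H$ and $\T gh\colon H\to G$, obtaining one direction from the golden hypothesis and the other from finite presentability. First I would fix, using $G\approx H$, a configuration pair $(\h,\F)$ with $\con gE=\con hF$; by Remark \ref{remark 1} I may assume $e_H$ lies in the block $F\in\F$ with $\{e_G\}\leftrightsquigarrow F$, and then Lemma \ref{recognize}(a) gives $F=\{e_H\}$. The golden property, transported through $G\approx H$ and Lemma \ref{hofian}, says precisely that the trivial-coset behaviour of a reduced word is configuration-detectable: for every reduced representative pair $(J,\rho)$ with $W(J,\rho;\g)\neq e_G$ one has $W(J,\rho;\h)F\cap F=\emptyset$, whence $W(J,\rho;\h)\neq e_H$. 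By the contrapositive, $W(J,\rho;\h)=e_H$ forces $W(J,\rho;\g)=e_G$, which by the bullet characterisation of epimorphisms makes $\beta:=\T gh$ a well-defined epimorphism $H\to G$ with $\beta(h_j)=g_j$.

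Next I would treat the case that $G$ is finitely presented, say $G=\langle g_1,\dots,g_n\mid r_1,\dots,r_k\rangle$ with $r_i=W(J_i,\rho_i;\g)$ reduced and $n_0=\max_i\n{J_i}$. Refine $\E$ to a partition $\E'$ containing $\{e_G\}$ together with the singletons $\{W(J,\rho;\g)\}$ for all $(J,\rho)\in S_0$, where $S_0$ is as in Lemma \ref{l21}; by Lemma \ref{refinement} the pair $(\g,\E')$ is still golden. Applying $G\approx H$ to $\E'$ and invoking Remark \ref{local homomorphism and refinements} with $\mathfrak F=\{(J_i,\rho_i)\}$, together with Lemma \ref{recognize}(a), the block of the new $H$-partition corresponding to each $\{r_i\}=\{e_G\}$ is exactly $\{e_H\}$, so $W(J_i,\rho_i;\h)=e_H$ for every $i$. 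Thus all defining relators of $G$ hold in $H$, and von Dyck's theorem yields a surjective homomorphism $\alpha\colon G\to H$ with $\alpha(g_j)=h_j$. Since $\beta\alpha$ fixes every generator $g_j$, it is the identity on $G$; hence $\alpha$ is injective, therefore an isomorphism, and $G\cong H$. In particular $H\cong G$ is finitely presented.

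For the converse I would assume $H$ finitely presented and promote the epimorphism $\beta\colon H\to G$ to an isomorphism by showing $\ker\beta$ is trivial, which simultaneously gives that $G\cong H$ is finitely presented. Fix a finite presentation of $H$ with relators $s_i=W(K_i,\sigma_i;\h)$ and refine $\F$ to a partition $\F^\ast$ that isolates, as singletons, $e_H$ and all values $W(K,\sigma;\h)$ with $\n K$ bounded by a constant large enough to run the induction of Lemma \ref{l21} on all the $s_i$ and their subwords. Lemma \ref{recognize}(b) produces a golden pair $(\g,\E^\ast)$ of $G$ with $\con g{E^\ast}=\con h{F^\ast}$, and now I run the induction of Lemma \ref{l21} starting from the $H$-side singletons, using Lemma \ref{important}(b) in its bidirectional form; this forces the block of $\E^\ast$ corresponding to each singleton $\{W(K,\sigma;\h)\}$ to be exactly $\{W(K,\sigma;\g)\}$. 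Because the blocks of a partition are pairwise distinct, distinct elements of $H$ among these words must have distinct images in $G$; applied to the finitely many relators this pins the presentation of $G$ down to that of $H$, so $\beta$ is injective and $G\cong H$.

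The step I expect to be the crux is the converse, and more precisely the interplay between refinement and the fixed generating sets. Detecting that a reduced word acts trivially on the identity block is only possible after the partition has been refined far enough to resolve that word and its subwords; a single finite refinement can resolve only boundedly many relations, and this is exactly where finite presentability is indispensable, since it lets one capture in one partition all the relations that must be transported, with the generating sets $\g$ and $\h$ held fixed. Keeping track of the bijectivity of the block-correspondence under Lemma \ref{important}(b), so that a relation which is ``invisible'' to configurations in one group is nevertheless forced to be a relation of the other, is the delicate point that makes the kernel of $\beta$ collapse.
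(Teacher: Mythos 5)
Your overall strategy --- transport the finitely many defining relators through a suitably refined partition and close the argument with a pair of mutually inverse homomorphisms --- is essentially the paper's, and using von Dyck together with $\beta\alpha=\mathrm{id}_G$ in place of the paper's ``same set of defining relators'' argument is a legitimate repackaging. The construction of $\beta=\T gh$ in your first paragraph, however, has a genuine gap. Definition \ref{golden} quantifies only over configuration pairs $(\g',\E')$ \emph{of $G$}; it does not assert the implication (\ref{implication}) for a pair $(\h,\F)$ of the \emph{other} group $H$, and neither does Lemma \ref{hofian}, which again concerns only pairs of $G$. So you cannot read off ``$W(J,\rho;\g)\neq e_G\Rightarrow W(J,\rho;\h)F\cap F=\emptyset$'' from the single equality $\con gE=\con hF$. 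The paper obtains the implication $W(I,\delta;\h)=e_H\Rightarrow W(I,\delta;\g)=e_G$ only word by word: refine $\F$ so as to isolate the $\h$-words of length up to $3\n I$, pull the refined partition back to a golden pair of $G$ with the \emph{same} $\g$ and the \emph{same} $\h$ via Lemma \ref{recognize}(b), and then apply Remark \ref{local homomorphism and refinements} with the roles of $G$ and $H$ exchanged. Your own closing paragraph concedes that one finite refinement resolves only boundedly many words, which is in tension with claiming a globally defined epimorphism $\beta$ from the unrefined pair $(\h,\F)$. The conclusion that $\T gh$ is an epimorphism for the fixed $\g,\h$ is salvageable by running the Lemma \ref{recognize}(b) argument for each word separately --- this works precisely because that lemma keeps both generating sets fixed --- but that justification has to be supplied.

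There is also a generating-set bookkeeping error in your finitely presented case. You fix $(\h,\F)$ with $\con gE=\con hF$ first, and only afterwards refine $\E$ to $\E'$ and apply $G\approx H$ again; the second application yields some generating set $\h'$ of $H$, with no guarantee that $\h'=\h$. Then $\alpha(g_j)=h'_j$ while $\beta(h_j)=g_j$, and $\beta\alpha$ need not fix the generators. The paper avoids this by refining $\E$ \emph{before} choosing the $H$-side pair (Lemma \ref{refinement} guarantees the refined pair is still golden), so that one and the same $\h$ serves both directions; you should reorder your argument accordingly. Finally, in the converse direction the step ``distinct elements of $H$ among these words must have distinct images in $G$ \dots\ so $\beta$ is injective'' does not follow: injectivity of $\beta$ is the implication $W(I,\delta;\g)=e_G\Rightarrow W(I,\delta;\h)=e_H$ for arbitrary words, and distinctness of finitely many blocks does not deliver it. What the paper actually does is show that the relators of $H$ become relators of $G$ while a non-relator of $H$ stays a non-relator of $G$, and then invoke \cite[Theorem 1.1]{combin}; some version of that presentation-comparison argument is still needed in your write-up.
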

  \begin{proof}
  Let $(\g,\E)$ be a golden configuration pair of the Hopfian group $G$. 
   \par Suppose that $G$ is finitely presented and put $\{W(J_i,\rho_i;\g), i=1,\dotsc,m\}$ for its set of defining relators. Set $\mathfrak F=\{(J_i,\rho_i), i=1,\dotsc,m\}$. By  Lemma \ref{refinement}, we can assume that $\E$ contains $\{e_G\}$ and singletons $\{W(J,\rho;\g)\}$, $(J,\rho)\in S_0$, where $S_0$ is defined as in the proof of Lemma \ref{l21}. Now, consider $\con gE=\con hF$, for a configuration pair $(\h,\F)$ of $H$. Hence, according to Lemma \ref{recognize}, part (a), we have $\{e_H\}\in\F$. Also, by Remark \ref{local homomorphism and refinements}, we have $W(J_i,\rho_i;\h)=e_H$, $i=1,\dotsc,m$.\\
    We claim that $\{W(J_i,\rho_i;\h), i=1,\dotsc,m\}$ is a set of defining relators, because if it is not, then we can find a relator in $H$, say $W(I,\delta;h)$, which can not be obtained from $\{W(J_i,\rho_i;\h), i=1,\dotsc,m\}$. But, by Lemma \ref{recognize}, (b), and using Remark \ref{local homomorphism and refinements} again, we have $W(I,\delta;\g)=e_G$ and this contradicts the fact that $\{W(J_i,\rho_i;\g), i=1,\dotsc,m\}$ is a set of defining relators of $G$. \\
  So, $G$ and $H$ are two groups with the same sets of defining relators, and therefore $G\cong H$, by \cite[Theorem 1.1]{combin}. 
  \par Now, let $(\h,\F)$ be a configuration pair such that $\con gE=\con hF$. Put $\{W(J_i,\rho_i;\h), i=1,\dotsc,m\}$ for a set of defining relators of $H$, and consider a representative pair $(J_0,\rho_0)$ such that $W(J_0,\rho_0;\h)\neq e_H$. Appealing once more to Remark \ref{local homomorphism and refinements}, and using part (b) of Lemma \ref{recognize} again, we can assume, without loss of generality that $W(J_i,\rho_i;\g)=e_G$, $i=1,\dotsc,m$ and $W(J_0,\rho_0;\g)\neq e_G$. Hence, \cite[Theorem 1.1]{combin} implies $G\cong H$.
  \end{proof}
  By previous theorem and Example \ref{groups with golden configuration pairs}, we have:
  \begin{corollary}
  The following hold:
  \begin{itemize}
  \item[(a)] If $G\approx\mathbb{F}_n$, then $G\cong\mathbb{F}_n$.
  \item[(b)] If $G\approx\mathbb Z^n\times F$, where $F$ is an arbitrary finite group, then $G\cong \mathbb Z^n\times F$.
  \item[(c)] If $G\approx D_\infty$, then $G\cong D_\infty$.
  \end{itemize}
  \end{corollary}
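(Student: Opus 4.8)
The plan is to read each of the three parts as a single invocation of Theorem \ref{last}, with the named group ($\mathbb{F}_n$, $\mathbb{Z}^n\times F$, or $D_\infty$) playing the role of the Hopfian group carrying a golden configuration pair and the ambient group $G$ of the corollary playing the role of $H$. Configuration equivalence is symmetric, so the hypothesis $G\approx\mathbb{F}_n$ is literally $\mathbb{F}_n\approx G$, and likewise in parts (b) and (c); thus Theorem \ref{last} becomes applicable the moment I verify, for each named group, the three standing hypotheses: that it is Hopfian, that it is finitely presented, and that it possesses a golden configuration pair.

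The last of these requires no new work, since it is exactly the content of Example \ref{groups with golden configuration pairs}: part (1) furnishes a golden configuration pair for the free group $\mathbb{F}_n$ (the abelian case $\mathbb{F}_1=\mathbb{Z}$ being subsumed under part (b) with trivial $F$), part (2) for every $\mathbb{Z}^n\times F$, and part (3) for $D_\infty$. It therefore remains only to record two routine structural facts in each case.

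For finite presentability: the free group $\mathbb{F}_n$ is presented by $n$ generators and no relators; the group $\mathbb{Z}^n\times F$ is the direct product of the finitely presented group $\mathbb{Z}^n$ with the finite, hence finitely presented, group $F$, and a direct product of finitely presented groups is finitely presented; and $D_\infty=\langle x,y:x^2=y^2=1\rangle$ is finitely presented by inspection. For Hopficity I would appeal to the classical theorem of Malcev that every finitely generated residually finite group is Hopfian: free groups are residually finite, $\mathbb{Z}^n\times F$ is residually finite as a direct product of residually finite groups, and $D_\infty\cong\mathbb{Z}_2*\mathbb{Z}_2$ is residually finite; each of the three is finitely generated, so each is Hopfian.

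With these verifications in hand, Theorem \ref{last} applies in all three cases: its finite-presentation hypothesis (that $G$ or $H$ be finitely presented) is met by the named group irrespective of the structure of the configuration-equivalent group $G$, and the theorem then yields $\mathbb{F}_n\cong G$, $\mathbb{Z}^n\times F\cong G$, and $D_\infty\cong G$, which are the claimed isomorphisms. I do not anticipate a genuine obstacle here; the only point demanding care is the bookkeeping of roles, namely using the symmetry of $\approx$ to place the named group into the distinguished Hopfian-golden slot of Theorem \ref{last}, rather than leaving it as the generic group $G$.
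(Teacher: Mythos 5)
Your proposal is correct and matches the paper's own argument, which derives the corollary in exactly this way: Theorem \ref{last} applied with the named group in the Hopfian slot and its golden configuration pair supplied by Example \ref{groups with golden configuration pairs}. Your explicit verifications of finite presentability and of Hopficity (via Malcev's theorem on finitely generated residually finite groups) are routine facts the paper leaves implicit, so the two proofs are essentially identical.
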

 Theorem \ref{last} leads us to the following questions:\\ 
\begin{question} (1) Is there a finitely generated group without a golden configuration pair?\par
(2) Does each Hopfian group have a golden configuration pair? \par
(3) How about finitely presented Hopfian groups? Do they have a golden configuration pair?
\end{question}

\end{document}